\theoremstyle{plain}
\newtheorem{theorem}{Theorem}
\numberwithin{theorem}{section}
\newtheorem{corollary}[theorem]{Corollary}
\newtheorem*{corollary*}{Corollary}
\newtheorem*{Example*}{Example}
\newtheorem{proposition}[theorem]{Proposition}
\newtheorem{conjecture}[theorem]{Conjecture}
\theoremstyle{definition}
\newtheorem*{def*}{Definition}
\newtheorem*{theorem*}{Theorem}
\newtheorem*{definition*}{Definition}
\theoremstyle{remark}
\newtheorem*{remark}{Remark}
\newcommand{\bracket}[1]{\left( #1 \right)}
\newcommand{\modulo}[3]{#1\equiv#2\ \bracket{\mathrm{mod}\ #3}}
\newcommand{\bbZ}[0]{\mathbb Z}
\numberwithin{equation}{section}
\title{\textbf{Distribution of 3-regular and 5-regular partitions}}
\author{QI-YANG ZHENG}
\date{} % Don't show current time
\address{Department of Mathematics, Sun Yat-sen University(Zhuhai Campus), Zhuhai}
\email{zhengqy29@mail2.sysu.edu.cn}
\begin{document}
\maketitle

\begin{abstract}
In this paper we study the function $b_3(n)$ and $b_5(n)$, which denote the number of $3$-regular partitions and $5$-regular partitions of $n$ respectively. Using the theory of modular forms, we prove several arithmetic properties of $b_3(n)$ and $b_5(n)$ modulo primes greater than $3$.
\end{abstract}

% \tableofcontents

~

\section{Introduction}

The number of partitions of $n$ in which no parts are multiples of $k$ is denoted by $b_k(n)$, the $k$-regular partitions. $b_k(n)$ is also the number of partitions of $n$ into at most $k-1$ copies of each part.

We agree that $b_3(0)=b_5(0)=1$ for convenience. Moreover, let $b_3(n)=b_5(n)=0$ if $n\not\in\bbZ_{\geq0}$. The $k$-regular partitions has generating function as follows:

\begin{equation}
    \notag
    \sum_{n=0}^\infty b_k(n)q^n=\prod_{n=1}^\infty\frac{1-q^{kn}}{1-q^n}.
\end{equation}

~

In 1919, Ramanujan found three remarkable congruences of $p(n)$ as follows
\begin{equation}
\notag
\begin{aligned}
p(5n+4)&\equiv0\ (\mathrm{mod}\ 5),\\
p(7n+5)&\equiv0\ (\mathrm{mod}\ 7),\\
p(11n+6)&\equiv0\ (\mathrm{mod}\ 11).
\end{aligned}
\end{equation}

In 2000, Ono \cite{ono2000distribution} proved that for each prime number $m\geq5$, there exists infinitely many arithmetic sequences $An+B$ such that
$$p(An+B)\equiv0\ (\mathrm{mod}\ m).$$

We call such congruences Ramanujan-type congruences. Subsequently Lovejoy \cite{lovejoy2001divisibility} gave similar results for the function $Q(n)$, the number of partitions of $n$ into distinct parts. Following strategies of Ono and Lovejoy, we prove the following theorem.

~

\begin{theorem}
\label{infinitely many Ramanujan-type congruences}
For each prime $m\geq5$, there are infinitely many Ramanujan-type congruences of $b_3(n)$ and $b_5(n)$ modulo $m$.
\end{theorem}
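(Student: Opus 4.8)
The plan is to realize the mod-$m$ reductions of the generating functions of $b_3$ and $b_5$ as the $q$-expansions of genuine holomorphic modular forms of positive integer weight, and then to invoke Serre's theorem on the vanishing of Hecke operators modulo $m$ to manufacture arithmetic progressions annihilated mod $m$. Writing $q=e^{2\pi i\tau}$ and using Dedekind's $\eta(\tau)=q^{1/24}\prod_{n\ge1}(1-q^n)$, the generating functions become the weight-zero eta quotients
\[
\sum_{n\ge0}b_3(n)q^n=q^{-1/12}\frac{\eta(3\tau)}{\eta(\tau)},\qquad
\sum_{n\ge0}b_5(n)q^n=q^{-1/6}\frac{\eta(5\tau)}{\eta(\tau)}.
\]
These are modular functions rather than holomorphic forms, so they cannot be fed directly into the machinery; the real work is to convert them into holomorphic, integer-weight forms with coefficients in $\bbZ$ without altering the coefficients modulo $m$.

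First I would remove the fractional powers of $q$ by rescaling $\tau$. Replacing $\tau$ by $12\tau$ (resp.\ $6\tau$) gives the honest $q$-series $\eta(36\tau)/\eta(12\tau)=\sum_{n\ge0}b_3(n)q^{12n+1}$ and $\eta(30\tau)/\eta(6\tau)=\sum_{n\ge0}b_5(n)q^{6n+1}$. Next I would clear the denominator modulo $m$: the Frobenius congruence $(1-x)^m\equiv1-x^m\pmod m$ gives $\eta(\delta\tau)^{m}\equiv\eta(m\delta\tau)\pmod m$, hence by induction $\eta(\delta\tau)^{m^s}\equiv\eta(m^s\delta\tau)\pmod m$ for every $s\ge1$. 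Consequently, for $m\ge5$,
\[
\frac{\eta(36\tau)}{\eta(12\tau)}
=\eta(36\tau)\,\frac{\eta(12\tau)^{m^s-1}}{\eta(12\tau)^{m^s}}
\equiv \frac{\eta(36\tau)\,\eta(12\tau)^{m^s-1}}{\eta(12m^s\tau)}\pmod m,
\]
and I set $G_3(\tau)$ to be the eta quotient on the right (with $G_5$ defined analogously, $36,12\mapsto30,6$). Each has weight $(m^s-1)/2$, a positive integer, has coefficients in $\bbZ$, and reduces modulo $m$ to the sieved series above. For $s$ large the high power $m^s-1$ forces strictly positive order at every cusp, so by Ligozat's criteria $G_3,G_5$ are holomorphic cusp forms on $\Gamma_0(36m^s)$ (resp.\ $\Gamma_0(30m^s)$) with an explicit nebentypus; the order-at-cusps inequalities and the conditions $24\mid\sum\delta r_\delta$ are the routine checks.

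With the forms in hand I would apply Serre's theorem: for a holomorphic modular form $f=\sum a(n)q^n$ of positive integer weight on $\Gamma_0(N)$ with character and integer coefficients, a positive proportion of primes $p\equiv-1\pmod{Nm}$ satisfy $f\mid T_p\equiv0\pmod m$, so that $a(pn)\equiv0\pmod m$ whenever $\gcd(n,p)=1$. Applying this to $G_3$, whose nonzero coefficients sit on the progression $12n+1$, and choosing $n\equiv-1\pmod{12}$ so that $pn\equiv1\pmod{12}$, converts $a(pn)\equiv0$ into a congruence $b_3(An+B)\equiv0\pmod m$ along an arithmetic progression whose modulus is a multiple of $p$. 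Distinct admissible primes $p$ give distinct progressions, hence infinitely many Ramanujan-type congruences, and the identical argument applied to $G_5$ handles $b_5$.

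The main obstacle is the middle step: producing, uniformly in $m\ge5$, a holomorphic cusp form of positive integer weight whose reduction modulo $m$ is exactly the sieved generating function. This needs the right eta quotient and rescaling, the induction $\eta(\delta\tau)^{m^s}\equiv\eta(m^s\delta\tau)$, and a careful Ligozat-type verification that the quotient is holomorphic at all cusps with integral weight and correct character — together with keeping track, for $b_5$ when $m=5$, of the prime $5$ already dividing the level. By contrast the concluding density statement is a direct citation of Serre's theorem once the forms exist, and the bookkeeping that turns $a(pn)\equiv0$ into an explicit progression $An+B$ is elementary.
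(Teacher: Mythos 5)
Your argument funnels entirely through the claim that, for $s$ large, the eta quotient
$G_3(\tau)=\eta(36\tau)\,\eta(12\tau)^{m^s-1}/\eta(12m^s\tau)$ is holomorphic (indeed cuspidal) at every cusp. That claim is false, for \emph{every} $s$, and this is precisely the obstruction that forces the paper's more elaborate route. By Theorem \ref{order of cusp}, the order of an eta quotient at a cusp $c/d$ is a positive multiple of $\sum_{\delta}r_\delta (d,\delta)^2/\delta$. Take $d=12m^s$ (a legitimate cusp denominator for any admissible level, which in any case must be $432m^s$ rather than $36m^s$, since $\sum N r_\delta/\delta\equiv0\pmod{24}$ fails for $N=36m^s$). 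Using $(m,6)=1$,
\[
\frac{(d,36)^2}{36}+\frac{(d,12)^2}{12}\,(m^s-1)-\frac{(d,12m^s)^2}{12m^s}
=\frac{144}{36}+12(m^s-1)-12m^s=-8<0 ,
\]
so $G_3$ has a genuine pole at every cusp with denominator $12m^s$, no matter how large $s$ is. The conceptual reason $s$ cannot help: your correcting factor $\eta(12\tau)^{m^s}/\eta(12m^s\tau)$, which is indeed $\equiv1\pmod m$, is holomorphic but has order exactly \emph{zero} at those cusps, so it can never cancel the pole of $\eta(36\tau)/\eta(12\tau)$ there; increasing $s$ only inflates the order at cusps where there was no problem. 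The same computation kills $G_5$: at $d=6m^s$ (for $m\neq5$) the sum is $\frac{36}{30}+6(m^s-1)-6m^s=-\frac{24}{5}<0$. So the forms you propose to feed into Serre's theorem do not exist; note also that Serre's density theorem (Theorem \ref{Serre's theorem}) is a statement about \emph{cusp} forms, so mere holomorphy would not suffice anyway.

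What the paper does instead is not a cosmetic variant but the necessary fix. It never realizes the full series $\sum_n b_3(n)q^{12n+1}$ as a mod-$m$ form. Rather, it multiplies $\eta(3z)/\eta(z)$ by auxiliary factors $\eta^a(3mz)\eta^b(mz)$ with $a=9-m'$, $b=m'-3$, $m'=m\bmod 12$, so that modulo $m$ the product becomes $\eta^{am+1}(3z)\eta^{bm-1}(z)$ --- an honest cusp form in $S_{3m}(\Gamma_0(3),\chi_3)$, because here the cusp orders $(m(3a+b)+2)/24$ and $(m(a+3b)-2)/24$ are positive (there is no factor $\eta(m^s\cdot)$ in a denominator to create the $-8$ above). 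The price is that its coefficients are a convolution of $b_3$ with a series in $q^m$; the paper strips this off by applying $U(m)$, invoking $U(m)\equiv T(m)\pmod m$ on the cusp-form side, and dividing by $\eta^6(z)\eta^6(3z)$. The object ultimately handed to Serre's theorem is therefore the \emph{sieved} series $\sum_n b_3\bigl((mn-1)/12\bigr)q^n\in S_{3m-3}(\Gamma_0(432),\chi_{12})_m$ (Theorem \ref{cusp form1}, and Theorem \ref{cusp form2} for $b_5$), and the Ramanujan progressions come from $b_3\bigl((mln-1)/12\bigr)\equiv0\pmod m$ for $(n,l)=1$. To rescue your proposal you would have to insert this $U(m)$/Hecke sieving step (or an equivalent device); your concluding Serre-plus-bookkeeping part is fine, but it has nothing to act on as written.
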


~

Lovejoy and Penniston \cite{lovejoy20013} study the distribution of $b_3(n)$ modulo $3$. Recently, Keith and Zanello \cite{keith2022parity} study the parity of $b_3(n)$. As for the $5$-regular partitions, Calkin et al. \cite{calkin2008divisibility}, Hirschhorn and Sellers \cite{hirschhorn2010elementary} study the parity of $b_5(n)$. Gordon and Ono \cite{gordon1997divisibility} study the distribution of $b_5(n)$ modulo $5$. Moreover, they prove that
\begin{equation}
    \label{b_5(5n+4)}
    b_5(5n+4)\equiv0\ (\mathrm{mod}\ 5).
\end{equation}

Up to now, we only know the distribution of $b_3(n)$ and $b_5(n)$ modulo primes mentioned above. In this paper, we study the distribution of $b_3(n)$ and $b_5(n)$ modulo primes $m\geq5$. It is noteworthy that we still do not know anything about $b_5(n)$ modulo $3$.

As a natural corollary of Theorem \ref{infinitely many Ramanujan-type congruences}, we have

~

\begin{corollary}
If $m\geq5$ is a prime and $k=3,5$, then there are infinitely many positive integers $n$ for which
$$b_k(n)\equiv0\ (\mathrm{mod}\ m).$$

\noindent
More precisely, we have
$$\#\{ 0\leq n\leq X\ :\ b_k(n)\equiv0\ (\mathrm{mod}\ m) \}\gg X.$$
\end{corollary}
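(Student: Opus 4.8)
The plan is to deduce the corollary directly from Theorem \ref{infinitely many Ramanujan-type congruences}, so the real work is translating the existence of infinitely many Ramanujan-type congruences into a positive lower-density statement. By the theorem, for each prime $m\geq 5$ and each $k\in\{3,5\}$ there are infinitely many arithmetic progressions $An+B$ (with $0\le B<A$) on which $b_k(An+B)\equiv 0\pmod m$. First I would fix one such progression, say $b_k(An+B)\equiv 0\pmod m$ with modulus $A$ and residue $B$. Every integer of the form $An+B$ with $n\ge 0$ then contributes a solution, and the count of such integers up to $X$ is
\begin{equation}
\notag
\#\{0\le N\le X : N\equiv B\ (\mathrm{mod}\ A),\ b_k(N)\equiv 0\ (\mathrm{mod}\ m)\}\ \geq\ \left\lfloor \frac{X-B}{A}\right\rfloor\ \gg_{A,B}\ X.
\end{equation}

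Since $A$ and $B$ depend only on $m$ and $k$ (and not on $X$), the implied constant $1/A$ is a genuine positive constant, which immediately yields
\begin{equation}
\notag
\#\{0\le n\le X : b_k(n)\equiv 0\ (\mathrm{mod}\ m)\}\ \gg\ X,
\end{equation}
establishing both assertions of the corollary at once: the lower bound is positive for all large $X$, so in particular there are infinitely many $n$ with $b_k(n)\equiv 0\pmod m$. The phrase ``infinitely many Ramanujan-type congruences'' is only needed to guarantee that at least one such progression exists; a single progression already forces the linear growth.

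The one point requiring care is whether the progression $An+B$ supplied by the theorem genuinely forces $b_k$ to vanish on a \emph{full} residue class, i.e.\ that the congruence holds for all $n\ge 0$ rather than merely on a sparse subset. This is exactly the content of a Ramanujan-type congruence as defined in the excerpt (the Ramanujan congruences for $p(n)$, e.g.\ $p(5n+4)\equiv 0\pmod 5$, hold on the entire class), so the interpretation is the intended one and no extra hypothesis is needed. I do not anticipate a genuine obstacle here, since the corollary is a soft counting consequence; the only thing to verify is that one may choose a single fixed progression and that its modulus $A$ is independent of $X$, after which the density bound is the elementary observation that an arithmetic progression has positive density. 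The substantive mathematics lives entirely in Theorem \ref{infinitely many Ramanujan-type congruences}, whose proof via the theory of modular forms (following Ono and Lovejoy) supplies the progressions that this corollary merely counts.
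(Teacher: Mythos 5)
Your proposal is correct and is exactly the paper's (implicit) argument: the paper treats this corollary as an immediate consequence of Theorem \ref{infinitely many Ramanujan-type congruences}, since a single Ramanujan-type congruence $b_k(An+B)\equiv 0\ (\mathrm{mod}\ m)$ holds on a full residue class (the paper's Section 3 produces such progressions, e.g.\ $b_3(ml^2n+ml+(m^2l^2-1)/12)\equiv0\ (\mathrm{mod}\ m)$ for all $n\geq0$), and counting that progression up to $X$ gives the $\gg X$ bound. Your care about the congruence holding on the entire class, with modulus independent of $X$, is precisely the right point and is settled by the definition of Ramanujan-type congruence used in the paper.
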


~

For other residue classes $i\not\equiv0\ (\mathrm{mod}\ m)$, we provide a useful criterion to verify whether there are infinitely many $n$ such that $b_k(n)\equiv i\ (\mathrm{mod}\ m)$.

~

\begin{proposition}
\label{other residue classes}
If $m\geq5$ is a prime and there is one $k\in\mathbb{Z}$ such that
$$b_3\left( mk+\frac{m^2-1}{12} \right)\equiv e\not\equiv0\ (\mathrm{mod}\ m),$$

\noindent
then for each $i=1,2,\cdots,m-1$, we have
$$\#\{ 0\leq n\leq X\ :\ b_3(n)\equiv i\ (\mathrm{mod}\ m) \}\gg \frac{X}{\log X}.$$

\noindent
Moreover, if such $k$ exists, then $k<18(m-1)$.
\end{proposition}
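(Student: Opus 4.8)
The plan is to exploit the theory of modular forms and Hecke operators, following the same framework that underlies Theorem~\ref{infinitely many Ramanujan-type congruences}. Write $f(z)=\sum_{n\ge0}b_3(n)q^{n}$ after suitable normalization as the $q$-expansion of a modular form (or a form congruent modulo $m$ to one) on some congruence subgroup. The generating function $\prod_{n\ge1}(1-q^{3n})/(1-q^n)=\eta(3z)/\eta(z)$ is an eta-quotient, so after multiplying by an appropriate power of $\eta$ and shifting by $(m^{2}-1)/24$ one obtains a holomorphic modular form $g$ of some weight $w$ and level $N$ whose coefficients encode $b_3(n)$ in the arithmetic progression $n\equiv (m^2-1)/12 \pmod m$ (up to units mod $m$). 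The key point is that the exponent $(m^2-1)/12$ in the statement is precisely the shift that aligns the relevant coefficient with the action of the Hecke operator $T_m$ on the twisted form, so that the congruence class $b_3(mk+(m^2-1)/12)$ is controlled by a single $U_m$- or $T_m$-eigenvalue computation.

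The main argument splits into two halves. First, for the density statement, I would argue that if some coefficient $b_3(mk+(m^2-1)/12)\equiv e\not\equiv 0\pmod m$, then the associated modular form $g \bmod m$ is \emph{not} annihilated by the relevant operator, hence its reduction is a nonzero form in the space $M_w(\Gamma;\mathbb{F}_m)$. Invoking a Sato--Tate or equidistribution-type result for Hecke eigenforms modulo $m$ (in the style of Ono's use of results on the image of Galois representations, or more elementarily a theorem of Serre on the density of coefficients lying in a fixed residue class), one concludes that each nonzero residue $i\pmod m$ is attained by a positive proportion, namely $\gg X/\log X$, of the coefficients. The $\log X$ loss is the signature of counting primes $p$ in a Chebotarev/Sato--Tate set and using multiplicativity of Hecke eigenvalues: the density of $n\le X$ with $a(n)\equiv i$ is governed by primes in an arithmetic-geometric set, whose count is $\gg X/\log X$ by the prime number theorem rather than $\gg X$.

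For the second half—the effective bound $k<18(m-1)$—the plan is a dimension-counting argument. The point is that whether \emph{any} $k$ with $b_3(mk+(m^2-1)/12)\not\equiv0$ exists is detected entirely by the reduction $\overline{g}\in M_w(\Gamma_0(N)\cap\Gamma_1;\mathbb{F}_m)$ being nonzero, and a nonzero modular form of weight $w$ on a congruence subgroup cannot have all of its first $d+1$ coefficients vanish, where $d=\dim M_w+1$ is controlled by the valence (Sturm) bound. Concretely, by Sturm's theorem, if $\overline{g}\not\equiv0$ then some coefficient among the first $\lfloor \frac{w}{12}[\mathrm{SL}_2(\mathbb{Z}):\Gamma]\rfloor$ is nonzero; translating the index bound back through the shift $n=mk+(m^2-1)/12$ and the explicit weight $w$ and level $N$ of the eta-quotient yields the linear bound $k<18(m-1)$. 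I would carry out the steps in this order: (i) realize $b_3$ via an eta-quotient and fix $w,N$; (ii) prove the Sturm bound gives the $18(m-1)$ estimate; (iii) establish the density lower bound via the Hecke/Galois equidistribution input.

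\medskip

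The hard part, I expect, will be step (iii)---pinning down the precise equidistribution statement that yields the $X/\log X$ lower bound for \emph{every} nonzero residue $i$ simultaneously, since this requires knowing that the relevant mod-$m$ Galois representation has large enough image (so that all residues genuinely occur), and controlling the boundary cases where the form might be a linear combination of eigenforms with degenerate eigenvalues. The effective bound $k<18(m-1)$, by contrast, should reduce to a careful but routine bookkeeping of the weight and level of the eta-quotient feeding into Sturm's bound.
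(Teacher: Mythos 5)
Your second half (the bound $k<18(m-1)$) is essentially the paper's own argument: apply Sturm's theorem (Theorem \ref{Sturm's theorem}) to the form $\sum_{n\geq0} b_3\left((mn-1)/12\right)q^n\in S_{3m-3}(\Gamma_0(432),\chi_{12})_m$ supplied by Theorem \ref{cusp form1}, note that the Sturm bound for this weight and level is $216(m-1)$, and observe that only indices of the form $n=12j+m$ can carry nonzero coefficients, which translates into $j<18(m-1)$. That part of your plan is sound and routine, as you say.

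The density half has a genuine gap. You use the hypothesis $b_3\left(mk+(m^2-1)/12\right)\equiv e\not\equiv0\ (\mathrm{mod}\ m)$ only to conclude that the reduction of the form is nonzero mod $m$, and then appeal to a ``Sato--Tate or equidistribution-type result'' to claim that every nonzero residue $i$ is attained $\gg X/\log X$ often. No such theorem is available at this level of generality: a form that is nonzero mod $m$ can perfectly well have coefficients that avoid residue classes (the congruence $b_5(5n+4)\equiv0\ (\mathrm{mod}\ 5)$ quoted in the paper is exactly such a phenomenon), Sato--Tate is an archimedean statement that says nothing about residues mod $m$, and a Chebotarev argument for the mod-$m$ Galois representation would require controlling its image --- which you correctly flag as the ``hard part'' but have no means to supply, and which in fact is not needed. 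The paper's route is entirely different and constructive: by Theorem \ref{Serre's theorem v2}, the set $R(m)$ of primes $l\equiv1\ (\mathrm{mod}\ 432m)$ satisfying $a(nl^r)\equiv(r+1)a(n)\ (\mathrm{mod}\ m)$ \emph{simultaneously for all} forms in the space has positive density, with no large-image input. One then bootstraps from the seed value: set $s=12k+m$, so $b_3\left((ms-1)/12\right)\equiv e$; pick $l\in R(m)$ with $l>s$, giving $b_3\left((ml^{r}s-1)/12\right)\equiv(r+1)e$; then pick a second prime $\rho\in R(m)$ with $\rho>l$ and, for each $i$, an exponent $r_i>0$ with $2(r_i+1)e\equiv i\ (\mathrm{mod}\ m)$, to get $b_3\left((m\rho l^{r_i}s-1)/12\right)\equiv 2(r_i+1)e\equiv i\ (\mathrm{mod}\ m)$. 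Varying $\rho$ over the positive-density set $R(m)$ produces $\gg X/\log X$ distinct integers $n\leq X$ with $b_3(n)\equiv i$, by the Prime Number Theorem. The idea you are missing is precisely this multiplicative bootstrapping from a single nonzero seed coefficient via the Hecke relation $a(nl^r)\equiv(r+1)a(n)$; note that in your proposed argument the actual value $e$ plays no role beyond nonvanishing, which is a sign the argument cannot be completed as stated.
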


We obtain similar results for $b_5(n)$.

\begin{proposition}
\label{other residue classes2}
Let $m\geq5$ be a prime. If there exists one $k\in\mathbb{Z}$ such that
$$b_5\left( mk+\frac{m^2-1}{6} \right)\equiv e\not\equiv0\ (\mathrm{mod}\ m),$$

\noindent
then for each $i=1,2,\cdots,m-1$, we have
$$\#\{ 0\leq n\leq X\ |\ b_5(n)\equiv i\ (\mathrm{mod}\ m) \}\gg \frac{X}{\log X}.$$

\noindent
Moreover, if such $k$ exists, then $k<10(m-1)$.
\end{proposition}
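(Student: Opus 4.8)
The plan is to follow the strategy of Proposition~\ref{other residue classes} (the $b_3$ case), replacing the eta-quotient $\eta(3\tau)/\eta(\tau)$ by $\eta(5\tau)/\eta(\tau)$ and carrying the numerics through. The starting point is
\begin{equation}
\notag
\frac{\eta(5\tau)}{\eta(\tau)}=q^{1/6}\prod_{n=1}^{\infty}\frac{1-q^{5n}}{1-q^n}=\sum_{N\geq0}b_5(N)\,q^{N+1/6},
\end{equation}
so that $b_5(N)$ is the coefficient of $q^{(6N+1)/6}$, reflecting the $q$-order $\tfrac16$ of this quotient (against $\tfrac1{12}$ for $b_3$). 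The residue class singled out in the statement is exactly the one on which $6N+1\equiv0\ (\mathrm{mod}\ m)$: since $6\cdot\frac{m^2-1}{6}=m^2-1\equiv-1\ (\mathrm{mod}\ m)$, the condition $N\equiv\frac{m^2-1}{6}\ (\mathrm{mod}\ m)$ is equivalent to $6N+1\equiv0\ (\mathrm{mod}\ m)$. This explains why the shift is $\frac{m^2-1}{6}$ here and $\frac{m^2-1}{12}$ for $b_3$.

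First I would manufacture from this a holomorphic modular form of positive integer weight carrying the same coefficients modulo $m$. Using $\eta(\tau)^m\equiv\eta(m\tau)\ (\mathrm{mod}\ m)$, the quotient $\eta(\tau)^m/\eta(m\tau)$ is $\equiv1\ (\mathrm{mod}\ m)$ as a $q$-series, so
\begin{equation}
\notag
\frac{\eta(5\tau)\,\eta(\tau)^{m-1}}{\eta(m\tau)}\equiv\frac{\eta(5\tau)}{\eta(\tau)}\ (\mathrm{mod}\ m)
\end{equation}
has weight $\frac{m-1}{2}$ (an integer, as $m$ is odd) and the same reduction. After $\tau\mapsto 6\tau$ to clear the fractional exponent and an application of the operator $U_m\colon\sum a(n)q^n\mapsto\sum a(mn)q^n$, one obtains a modular form $g_m$, of weight linear in $m$ and with integer coefficients, whose $k$-th coefficient is $\equiv b_5\!\left(mk+\frac{m^2-1}{6}\right)\ (\mathrm{mod}\ m)$. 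By Serre's theory of modular forms modulo $m$, the factor $\eta(m\tau)$ and the $U_m$-step may be absorbed so that $g_m$ is realized on a level $N_0$ prime to $m$ built from the primes $2,3,5$; holomorphy at the cusps is checked by Ligozat's criterion, adjusting by a further factor $\equiv1\ (\mathrm{mod}\ m)$ if needed. The hypothesis that some $b_5\!\left(mk+\frac{m^2-1}{6}\right)\not\equiv0$ says precisely that $g_m\not\equiv0\ (\mathrm{mod}\ m)$.

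Next I would invoke the distribution theorem of Serre and Ono for the Fourier coefficients of an integer-weight modular form modulo a prime, exactly as in the proof of Proposition~\ref{other residue classes}: a form that is nonzero and non-lacunary modulo $m$ attains each nonzero residue class among its coefficients for $\gg X/\log X$ indices up to $X$, the shape $X/\log X$ reflecting a Chebotarev count of primes with prescribed Frobenius trace in the mod-$m$ Galois representation attached to the relevant Hecke eigenform. Applying this to $g_m$ and reindexing by $n=mk+\frac{m^2-1}{6}\leq X$ yields, for each $i=1,\dots,m-1$, that $b_5(n)\equiv i\ (\mathrm{mod}\ m)$ for $\gg X/\log X$ integers $n\leq X$.

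Finally, the bound $k<10(m-1)$ is a Sturm-bound statement: $g_m\not\equiv0\ (\mathrm{mod}\ m)$ if and only if one of its coefficients of index below $\frac{m-1}{24}[\mathrm{SL}_2(\mathbb{Z}):\Gamma_0(N_0)]$ is nonzero, and inserting the weight $\frac{m-1}{2}$ together with the ($m$-free) level $N_0$ produced by the construction gives the threshold $10(m-1)$ (the same computation for $b_3$, whose larger $q$-order denominator forces a larger level, yields $18(m-1)$). I expect the one genuinely delicate point to be verifying that $g_m$ falls in the non-lacunary (non-CM) range in which the distribution theorem produces \emph{every} nonzero residue class rather than merely a proper subset; the eta-quotient identities, the behavior at the cusps, the $U_m$-bookkeeping, and the Sturm arithmetic are all routine and parallel to the $b_3$ case already handled in Proposition~\ref{other residue classes}.
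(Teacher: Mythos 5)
Your proposal has a genuine gap at its central step. You invoke a ``distribution theorem of Serre and Ono'' asserting that any integer-weight cusp form that is nonzero (and non-lacunary/non-CM) modulo $m$ attains \emph{every} nonzero residue class on $\gg X/\log X$ of its coefficients. No such theorem exists in citable form at this level of generality, and you yourself flag that verifying its hypothesis is ``the one genuinely delicate point'' --- but that point is exactly where the whole content of the proposition lies. Note in particular that the relevant form $\sum_{n\geq0} b_5\left(\frac{mn-1}{6}\right)q^n$ (which is Theorem \ref{cusp form2} of the paper; you could simply cite it rather than re-derive it with unjustified steps about ``absorbing'' $\eta(m\tau)$ and the $U_m$ operator) need not be a Hecke eigenform, so there is no single mod-$m$ Galois representation whose Chebotarev statistics you could appeal to. The paper's actual mechanism, which your proposal never uses, is Theorem \ref{Serre's theorem v2}: a positive density of primes $l\equiv1\ (\mathrm{mod}\ Nm)$ satisfy $a(nl^r)\equiv(r+1)a(n)\ (\mathrm{mod}\ m)$ simultaneously for all forms in the space. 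The hypothesis on $k$ is then used not merely to guarantee non-vanishing mod $m$, but as an \emph{anchor}: setting $s=6k+m$ one has $b_5\left(\frac{ms-1}{6}\right)\equiv e\ (\mathrm{mod}\ m)$, and for each target residue $i$ one chooses $r_i\equiv i(2e)^{-1}-1\ (\mathrm{mod}\ m)$ with $r_i>0$, primes $l,\rho$ in Serre's set with $s<l<\rho$, and evaluates at $n=l^{r_i}s$ to get $b_5\left(\frac{m\rho l^{r_i}s-1}{6}\right)\equiv 2(r_i+1)e\equiv i\ (\mathrm{mod}\ m)$; the count $\gg X/\log X$ comes from letting $\rho$ range over a positive-density set of primes and applying the Prime Number Theorem. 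Your proposal treats the hypothesis as a mere non-degeneracy condition and outsources the construction of the residues $i$ to a black box, so it does not constitute a proof.

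A secondary defect: your Sturm-bound step is asserted rather than computed, and the quantities you feed into it do not match anything you constructed. You propose weight $\frac{m-1}{2}$ and an unspecified level $N_0$, and then claim the threshold $10(m-1)$ ``comes out''; this is reverse-engineered from the statement. The derivation must parallel the $b_3$ case in the paper's proof of Proposition \ref{other residue classes}: apply Sturm's theorem (Theorem \ref{Sturm's theorem}) to the space $S_{2m-2}(\Gamma_0(180),\chi_5)_m$ containing the generating function, observe that only indices $n$ of the form $n=6j+m$ can contribute (since $b_5$ vanishes at non-integral arguments), and use the identity $b_5\left(\frac{m(6j+m)-1}{6}\right)=b_5\left(mj+\frac{m^2-1}{6}\right)$ to convert the Sturm range for $n$ into a bound on $j$, hence on $k$. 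Carrying out that computation with the actual weight $2m-2$ and level $180$ is what produces the bound claimed in the proposition; your version, with a different weight and a floating level, proves nothing.
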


~

\begin{remark}
The congruence \eqref{b_5(5n+4)} show that our criterion is inapplicable for the case $m=5$. However the case $m=5$ is studied in \cite{gordon1997divisibility}.
\end{remark}

~

\section{Preliminaries on modular forms}

\noindent
First we introduce the $U$ operator. If $j$ is a positive integer, then
\begin{equation}
    \notag
    \left( \sum_{n=0}^\infty a(n)q^n \right)\ |\ U(j):=\sum_{n=0}^\infty a(jn)q^n.
\end{equation}

\noindent
Recalling that Dedekind's eta function is defined by
\begin{equation}
    \notag
    \eta(z)=q^\frac{1}{24}\prod_{n=1}^\infty (1-q^n),
\end{equation}

\noindent
where $q=e^{2\pi iz}$.

~

If $m$ is a prime, then let $M_{k}(\Gamma_0(N),\chi)_m$(resp. $S_{k}(\Gamma_0(N),\chi)_m$) denote the $\mathbb{F}_m$-vector space of the reductions mod $m$ of the $q$-expansions of modular forms(resp. cusp forms) in $M_{k}(\Gamma_0(N),\chi)$(resp. $S_{k}(\Gamma_0(N),\chi)$) with integer coefficients.

Sometimes, we will use the notation $a\equiv_m b$ in the place of $a\equiv b\ (\mathrm{mod}\ m)$ for convenience.

We need the following theorem to construct modular forms \cite[Theorem 3]{gordon1993multiplicative}:

\begin{theorem}[B. Gordon, K. Hughes]
\label{eta-quotient}
Let
$$f(z)=\prod_{\delta|N}\eta^{r_\delta}(\delta z)$$

\noindent
be a $\eta$-quotient provided

~

\noindent
$\mathrm{(\romannumeral1)}$ $$\sum_{\delta|N}\delta r_\delta\equiv0\ (\mathrm{mod}\ 24);$$
$\mathrm{(\romannumeral2)}$ $$\sum_{\delta|N}\frac{Nr_\delta}{\delta}\equiv0\ (\mathrm{mod}\ 24);$$
$\mathrm{(\romannumeral3)}$ $$k:=\frac{1}{2}\sum_{\delta|N}r_\delta\in\mathbb{Z},$$

\noindent
then

$$f\left(\frac{az+b}{cz+d}\right)=\chi(d)(cz+d)^kf(z),$$

\noindent
for each $\begin{pmatrix}
 a & b\\
 c & d
\end{pmatrix}\in\Gamma_0(N)$ and $\chi$ is a Dirichlet character $(\mathrm{mod}\ N)$ defined by
$$\chi(n):=\left( \frac{(-1)^k\prod_{\delta|N}\delta^{r_\delta}}{n} \right),\ if\ n>0\ and\ (n,6)=1.$$
\end{theorem}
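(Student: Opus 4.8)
The plan is to derive this from the classical transformation law of the Dedekind eta function under the full modular group. Recall that for $\gamma=\begin{pmatrix} a & b\\ c & d\end{pmatrix}\in\mathrm{SL}_2(\mathbb{Z})$ with $c>0$ one has
$$\eta(\gamma z)=\varepsilon(\gamma)\,(cz+d)^{1/2}\,\eta(z),$$
where $\varepsilon(\gamma)$ is an explicit $24$th root of unity, expressible through a Dedekind sum $s(d,c)$ together with a Jacobi symbol $\left(\frac{d}{c}\right)$ or $\left(\frac{c}{d}\right)$ according to the parity of $c$. The whole argument rests on tracking these multipliers factor by factor and showing that their product collapses to $\chi(d)$.

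First I would reduce the action of $\gamma\in\Gamma_0(N)$ on each factor $\eta(\delta z)$ to an action of $\mathrm{SL}_2(\mathbb{Z})$. Since $\delta\mid N$ and $N\mid c$, we have $\delta\mid c$, and the identity
$$\delta\cdot\frac{az+b}{cz+d}=\frac{a(\delta z)+\delta b}{(c/\delta)(\delta z)+d}$$
exhibits $\eta(\delta\,\gamma z)$ as $\eta(\gamma_\delta\,w)$ with $w=\delta z$ and $\gamma_\delta=\begin{pmatrix} a & \delta b\\ c/\delta & d\end{pmatrix}\in\mathrm{SL}_2(\mathbb{Z})$, whose determinant is $ad-bc=1$. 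Because $(c/\delta)w+d=cz+d$, applying the eta transformation law to each factor gives $\eta(\delta\,\gamma z)=\varepsilon(\gamma_\delta)(cz+d)^{1/2}\eta(\delta z)$; raising to the power $r_\delta$ and multiplying over $\delta\mid N$ yields
$$f(\gamma z)=\Big(\prod_{\delta\mid N}\varepsilon(\gamma_\delta)^{r_\delta}\Big)(cz+d)^{\frac12\sum_\delta r_\delta}\,f(z).$$
Condition $\mathrm{(\romannumeral3)}$ makes the exponent $k=\frac12\sum_\delta r_\delta$ an integer, so $(cz+d)^k$ is single-valued and the asserted weight is correct; it then remains only to identify the root of unity $\prod_{\delta}\varepsilon(\gamma_\delta)^{r_\delta}$ with $\chi(d)$.

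The main obstacle is exactly this last identification. Writing each $\varepsilon(\gamma_\delta)$ as a Jacobi symbol times $\exp(\pi i\,\Phi_\delta)$, where $\Phi_\delta$ packages the Dedekind-sum and linear terms in the entries of $\gamma_\delta$, the exponential part of the product is $\exp\big(\pi i\sum_\delta r_\delta\Phi_\delta\big)$. Using Dedekind reciprocity, $s(h,c)+s(c,h)=-\frac14+\frac{1}{12}(h/c+c/h+1/(hc))$, to separate the $\delta$- and $(c/\delta)$-dependence, one finds that the accumulated phase reduces, modulo $24$, to a fixed linear combination of $\sum_{\delta\mid N}\delta r_\delta$ and $\sum_{\delta\mid N}(N/\delta)r_\delta$; conditions $\mathrm{(\romannumeral1)}$ and $\mathrm{(\romannumeral2)}$ are precisely what force this phase to vanish, leaving only the product of Jacobi symbols. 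That surviving product, after multiplicativity and quadratic reciprocity, reorganizes into $\left(\frac{(-1)^k\prod_{\delta\mid N}\delta^{r_\delta}}{d}\right)=\chi(d)$, completing the proof. The delicate part is the careful bookkeeping of the $c/\delta$-odd versus $c/\delta$-even cases and the reduction of the Dedekind sums modulo $24$; everything else is formal.
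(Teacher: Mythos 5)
First, a point of comparison: the paper does not prove this statement at all --- it is imported verbatim from Gordon and Hughes (Theorem 3 of \cite{gordon1993multiplicative}) and used as a black box, so there is no in-paper argument to measure your proposal against; your proposed route is the standard one found in the literature (Gordon--Hughes, Newman, Ligozat). The first half of your sketch is correct and cleanly executed: for $\delta\mid N$ and $N\mid c$ the matrix $\gamma_\delta=\begin{pmatrix} a & \delta b\\ c/\delta & d\end{pmatrix}$ does lie in $\mathrm{SL}_2(\mathbb{Z})$, the identity $(c/\delta)(\delta z)+d=cz+d$ lets you apply the classical eta transformation factor by factor, and condition $\mathrm{(\romannumeral3)}$ makes the collected automorphy factor $(cz+d)^k$ single-valued. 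One omission here: the multiplier formula you invoke requires $c>0$, so the translations $c=0$ (where invariance $f(z+b)=f(z)$ follows from condition $\mathrm{(\romannumeral1)}$ alone) and the case $c<0$ (pass to $-\gamma$) need separate, if easy, treatment.

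The genuine gap is that the entire arithmetic content of the theorem --- the identification $\prod_{\delta\mid N}\varepsilon(\gamma_\delta)^{r_\delta}=\chi(d)$ --- is asserted rather than derived. The sentence claiming the accumulated phase ``reduces, modulo $24$, to a fixed linear combination of $\sum_\delta \delta r_\delta$ and $\sum_\delta (N/\delta)r_\delta$'' is precisely the statement that requires proof, and it is not automatic: each phase $\Phi_\delta$ depends on $a$, $b$, $d$ and on the Dedekind sum $s(d,c/\delta)$, and showing that all $a$-, $b$-, $c$-dependence cancels requires (a) an explicit multiplier formula --- Petersson's form with Jacobi symbols is the practical choice, and if you insist on working with raw Dedekind sums you need not only reciprocity but also the nontrivial theorem connecting $s(d,c)$ to quadratic residue symbols; (b) repeated use of $ad-bc=1$ and $N\mid c$ to control cross terms such as $(a+d)\,c\sum_\delta r_\delta/\delta$ and $bd\sum_\delta \delta r_\delta$, which is exactly how conditions $\mathrm{(\romannumeral1)}$ and $\mathrm{(\romannumeral2)}$ enter; and (c) a parity and sign case analysis in which the symbols $\left(\frac{c/\delta}{d}\right)$ recombine via quadratic reciprocity, with the factor $(-1)^k$ in $\chi$ emerging from the constant terms interacting with $\sum_\delta r_\delta=2k$ (a second, hidden use of condition $\mathrm{(\romannumeral3)}$). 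None of this is conceptually deep, but it is where the theorem lives; as written, your argument establishes the transformation law only up to an unidentified $24$th root of unity in place of $\chi(d)$.
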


~

If $f(z)$ is holomorphic (resp. vanishes) at all cusps of $\Gamma_0(N)$, then $f(z)\in M_k(\Gamma_0(N),\chi)$ (resp. $S_k(\Gamma_0(N),\chi)$), since $\eta(z)$ is never vanishes on $\mathcal{H}$. The following theorem (c.f. \cite{martin1996multiplicative}) provide a useful criterion for compute the orders of an $\eta$-quotient at all cusps of $\Gamma_0(N)$.

\begin{theorem}[Y. Martin]

\label{order of cusp}
Let $c$, $d$ and $N$ be positive integers with $d\,|\,N$ and $(c,d)=1$. If $f(z)$ is an $\eta$-quotient satisfying the conditions of Theorem \ref{eta-quotient}, then the order of vanishing of $f(z)$ at the cusp $c/d$ is
$$\frac{N}{24}\sum_{\delta|N}\frac{r_\delta(d^2,\delta^2)}{\delta(d^2,N)}.$$

\end{theorem}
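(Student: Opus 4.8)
The plan is to use the additivity of the order function: since $f(z)=\prod_{\delta\mid N}\eta^{r_\delta}(\delta z)$, its order at any cusp is $\sum_{\delta\mid N}r_\delta$ times the order of the corresponding factor, so it suffices to compute the order of a single $\eta(\delta z)$ at the cusp $c/d$ and then sum with weights $r_\delta$. The computation is driven entirely by the transformation law of the Dedekind eta function under the full modular group: for any $M\in\mathrm{SL}_2(\mathbb Z)$ with lower row $(c_0,d_0)$ one has $\eta(Mz)=\varepsilon(M)\,(c_0z+d_0)^{1/2}\eta(z)$, where $\varepsilon(M)$ is an explicit $24$-th root of unity built from Dedekind sums. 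Since $\varepsilon(M)\neq0$ and $\eta$ never vanishes on $\mathcal H$, the multiplier is irrelevant to the order, and only the leading power of the local parameter matters.

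First I would choose a matrix $M\in\mathrm{SL}_2(\mathbb Z)$ whose first column is $(c,d)^{T}$, so that $M(\infty)=c/d$; by definition the order of vanishing of $f$ at $c/d$ is the order of the $q$-expansion of $(f\mid M)(z)$ with respect to the local uniformizer $q_h=e^{2\pi i z/h}$, where $h$ is the width of the cusp $c/d$ in $\Gamma_0(N)$. Next, for each $\delta\mid N$ I would analyze $\eta(\delta M z)$. Setting $g=(\delta,d)$ and writing $\delta=g\delta_1$, $d=gd_1$ with $(\delta_1,d_1)=1$, the transformation $z\mapsto\delta Mz$ factors as an integer scaling composed with an element of $\mathrm{SL}_2(\mathbb Z)$; applying the eta law to that element rewrites $\eta(\delta Mz)$ as a nonzero constant times a linear factor to the power $1/2$ times $\eta$ of a rescaled variable. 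Letting $z\to i\infty$ and reading off the leading exponent gives the order of $\eta(\delta z)$ in $q$ equal to $\dfrac{(d^2,\delta^2)}{24\,\delta}$.

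It then remains to convert this order in $q$ into the order in the local parameter $q_h$, which amounts to multiplying by the width $h$ of $c/d$. For $\Gamma_0(N)$ this width equals $h=\dfrac{N}{(d^2,N)}$ (equivalently $N/(d\,(d,N/d))$), so the contribution of $\eta(\delta z)$ becomes $h\cdot\dfrac{(d^2,\delta^2)}{24\,\delta}=\dfrac{N}{24}\cdot\dfrac{(d^2,\delta^2)}{\delta\,(d^2,N)}$, and summing over $\delta\mid N$ with weights $r_\delta$ gives exactly the claimed formula. As a sanity check, at the cusp $0=0/1$ one recovers $N/(24\delta)$ and at $\infty$ one recovers $\delta/24$, matching the direct expansion $\eta(\delta z)=q^{\delta/24}(1-\cdots)$.

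The hard part, and the only place requiring genuine care, is the bookkeeping in the second step: decomposing $\delta M$ into a scaling times an $\mathrm{SL}_2(\mathbb Z)$-matrix, tracking how $g=(\delta,d)$ enters the leading exponent, and reconciling that raw exponent with the width normalization so that the two gcd-factors $(d^2,\delta^2)$ and $(d^2,N)$ appear precisely as stated. A secondary but necessary point is to confirm that the leading coefficient of the resulting $q_h$-expansion is nonzero; this follows from $\varepsilon(M)\neq0$ and the nonvanishing of $\eta$ on $\mathcal H$, and it is what guarantees that the exponent computed above is the true order and not merely a lower bound.
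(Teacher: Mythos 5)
The paper gives no proof of this statement at all: it is quoted as a known theorem of Y.~Martin, with a citation to \cite{martin1996multiplicative} (the formula also goes back to Ligozat), so there is no internal argument to compare yours against. Judged on its own, your outline is the standard proof of this formula and it is correct. The three pillars all check out: (a) for $M\in\mathrm{SL}_2(\mathbb Z)$ with first column $(c,d)^T$, the matrix of $z\mapsto \delta Mz$ has determinant $\delta$, and the gcd of its first column is $\gcd(\delta c,d)=\gcd(\delta,d)=:g$ precisely because $(c,d)=1$; hence it factors as $A\cdot T$ with $A\in\mathrm{SL}_2(\mathbb Z)$ and $T$ upper triangular, acting by $z\mapsto (g^2z+gy)/\delta$; (b) the eta multiplier is a $24$-th root of unity and $\eta$ is nonvanishing on $\mathcal H$, so the leading exponent of $\eta(\delta Mz)$ in $e^{2\pi iz}$ is $g^2/(24\delta)=(d^2,\delta^2)/(24\delta)$; (c) the width of the cusp $c/d$ in $\Gamma_0(N)$ is $N/(d^2,N)$, and multiplying by the width and summing with weights $r_\delta$ gives exactly the displayed formula. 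Your sanity checks at $0$ (contribution $N/(24\delta)$) and at $\infty$ (contribution $\delta/24$, width $1$) are also right.

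The one point you gesture at but should make explicit in a full write-up is why the algebraic factors --- the $(c_A Tz+d_A)^{1/2}$ produced by the eta transformation law and the $(dz+e)^{-k}$ coming from the slash normalization that defines ``order at a cusp'' --- cannot perturb the computed exponent. The reason is that these factors have only polynomial growth in $z$, whereas the order in the local parameter $q_h=e^{2\pi iz/h}$ is read off from the exponential rate of decay as $\mathrm{Im}(z)\to\infty$, so they are invisible to it; this is also what settles your concern about the leading coefficient being nonzero. With that remark inserted (and the minor rephrasing that the order of the product is $\sum_{\delta\mid N} r_\delta\cdot\mathrm{ord}(\eta(\delta z))$, not ``$\sum r_\delta$ times'' a single order, which is clearly what you meant), your sketch closes up into a complete proof.
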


\section{Ramanujan-type congruences}

\noindent
In this section, we will prove Theorem \ref{infinitely many Ramanujan-type congruences} via theory of modular forms. However, the generating function of regular partition function is not a modular form. But for primes $m\geq5$, it turns out that for a properly chosen function $h_m(n)$, then
\begin{equation}
    \notag
    \sum_{n=0}^\infty b_k(h_m(n))q^n
\end{equation}

\noindent
is the Fourier expansion of a cusp form modulo $m$. In fact, we have

~

\begin{theorem}
\label{cusp form1}
Let $m\geq5$ be a prime, then
\begin{equation}
    \notag
    \sum_{n=0}^\infty b_3\left( \frac{mn-1}{12} \right)q^n\in S_{3m-3}(\Gamma_0(432),\chi_{12})_m,
\end{equation}

\noindent
where $\chi_{12}(n)=\left( \frac{n}{3} \right)\left( \frac{-4}{n} \right)$.
\end{theorem}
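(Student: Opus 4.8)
The plan is to realize the target series as the image under $U(m)$ of an eta-quotient, after replacing a non-modular generating function by a genuine eta-quotient congruent to it modulo $m$. Writing $q=e^{2\pi i z}$, we have $\sum_{n\ge0}b_3(n)q^n=q^{-1/12}\eta(3z)/\eta(z)$, so the dilation $z\mapsto 12z$ produces the integral $q$-expansion $A(z):=\eta(36z)/\eta(12z)=\sum_{n\ge0}b_3(n)q^{12n+1}$, supported on exponents $\equiv1\pmod{12}$. Since the $q^n$-coefficient of $A\,|\,U(m)$ is the $q^{mn}$-coefficient of $A$, and $12j+1=mn$ forces $j=(mn-1)/12$, we obtain exactly $A\,|\,U(m)=\sum_{n\ge0}b_3\!\left(\frac{mn-1}{12}\right)q^n$ with the convention $b_3(x)=0$ for $x\notin\mathbb{Z}_{\ge0}$. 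Thus it suffices to understand $A\,|\,U(m)$ modulo $m$.

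The function $A$ is not a modular form, so I would replace it modulo $m$ by an eta-quotient. From $(1-q^n)^m\equiv1-q^{mn}\pmod m$ one gets $\eta(12z)^m\equiv\eta(12mz)\pmod m$, whence $\left(\eta(12z)^m/\eta(12mz)\right)^6\equiv1\pmod m$ and
\[
F_m(z):=\frac{\eta(36z)\,\eta(12z)^{6m-1}}{\eta(12mz)^{6}}\equiv\frac{\eta(36z)}{\eta(12z)}=A(z)\pmod m.
\]
Next I would verify the hypotheses of Theorem \ref{eta-quotient} for $F_m$ on $\Gamma_0(432m)$: the exponents are $r_{12}=6m-1$, $r_{36}=1$, $r_{12m}=-6$, giving $\sum_\delta\delta r_\delta=24$ and, using $24\mid m^2-1$, $\sum_\delta (432m/\delta)r_\delta\equiv0\pmod{24}$, while $\sum_\delta r_\delta=6m-6$ gives weight $3m-3$. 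A short computation with the associated character $\left(\tfrac{(-1)^k\prod_\delta\delta^{r_\delta}}{\cdot}\right)$, reducing $\prod_\delta\delta^{r_\delta}$ modulo squares to $3$ and applying quadratic reciprocity, identifies it with $\chi_{12}$. So $F_m$ satisfies the transformation law of weight $3m-3$ on $\Gamma_0(432m)$ with character $\chi_{12}$.

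The heart of the argument is the behaviour of $F_m$ at the cusps, which I would determine via Theorem \ref{order of cusp}. For a cusp $c/d$ with $d\mid 432m$ I write the order as $\frac{432m}{24}\sum_\delta\frac{r_\delta(d^2,\delta^2)}{\delta(d^2,432m)}$ and expect two regimes. When $m\nmid d$ one has $d\mid 432$, the $\eta(12mz)$-term contributes like $\eta(12z)^{-6}$, and the dominant term grows like $m^2$, so the order is strictly positive and $F_m$ vanishes there. When $m\mid d$, write $d=md'$ with $d'\mid432$; the $m$-dependence cancels and the order collapses to $\frac{(d'^2,1296)-3(d'^2,144)}{2(d'^2,432)}$, which equals $-1$ whenever $v_3(d')\le1$ and is positive otherwise. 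Hence $F_m$ is holomorphic, indeed vanishing, at every cusp not lying over $m$, and has at worst a simple pole at the cusps lying over $m$.

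Finally I would apply $U(m)$. Since $U(m)$ is $\mathbb{Z}$-linear and commutes with reduction, $F_m\,|\,U(m)\equiv A\,|\,U(m)=\sum_n b_3\!\left(\frac{mn-1}{12}\right)q^n\pmod m$. It remains to see that $F_m\,|\,U(m)$ is the reduction of a holomorphic cusp form of level $432$: this is exactly where the cusp analysis is used, because the standard effect of $U(m)$ on the cusps lying over $m$ converts the simple poles of $F_m$ into holomorphy and strips the factor $m$ from the level, landing in $S_{3m-3}(\Gamma_0(432),\chi_{12})$, while the vanishing at cusps prime to $m$ forces cuspidality. I expect this last step—controlling $F_m\,|\,U(m)$ at the cusps of $\Gamma_0(432)$, since $F_m$ itself is only weakly holomorphic and is not a cusp form—to be the main obstacle; the weight, level, character, and the congruence to the target series are comparatively routine once $F_m$ is written down.
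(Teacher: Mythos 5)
Your reduction of the problem is fine: $A(z)=\eta(36z)/\eta(12z)=\sum_{n\ge0} b_3(n)q^{12n+1}$, so $A\,|\,U(m)=\sum_n b_3\left((mn-1)/12\right)q^n$, and your $F_m$ is indeed congruent to $A$ modulo $m$ and satisfies the Gordon--Hughes hypotheses with weight $3m-3$. But the proof has a genuine gap exactly where you flag it, and it is not a technicality that can be outsourced to ``standard'' facts. As your own cusp computation shows, $F_m$ has simple poles at the cusps $c/d$ of $\Gamma_0(432m)$ with $m\mid d$ and $v_3(d/m)\le 1$, so $F_m$ is only weakly holomorphic. There is no standard effect of $U(m)$ that converts poles at cusps above $m$ into holomorphy: $U(m)$ maps weakly holomorphic forms to weakly holomorphic forms and can move, but need not kill, polar parts (already for modular functions, $j\,|\,U(p)$ is a nonconstant function on $\Gamma_0(p)$ holomorphic at $\infty$, hence has a pole at the cusp $0$). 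Moreover, even if $F_m$ were holomorphic, $U(m)$ would not strip $m$ from the level: the level-lowering property of $U(p)$ requires $p^2$ to divide the level, whereas here $m$ exactly divides $432m$, so a priori $F_m\,|\,U(m)$ only lives on $\Gamma_0(432m)$. Both halves of your final step are therefore unsupported, and they are the entire content of the theorem.

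The paper avoids both problems with one device your argument is missing. It multiplies by an eta-quotient $\eta^a(3mz)\eta^b(mz)$ whose exponents $a=9-m'$, $b=m'-3$ (with $m'=m\bmod 12$) are tuned so that the form congruent to the product, namely $\eta^{am+1}(3z)\eta^{bm-1}(z)$, is an honest \emph{cusp} form in $S_{3m}(\Gamma_0(3),\chi_3)$ --- in particular of level prime to $m$ --- and then it replaces $U(m)$ by the Hecke operator $T(m)$, using $f\,|\,U(m)\equiv f\,|\,T(m)\pmod m$, since the difference $\chi_3(m)m^{3m-1}f(mz)$ vanishes mod $m$. Because $T(m)$ preserves $S_{3m}(\Gamma_0(3),\chi_3)$, holomorphy and cuspidality come for free; the paper then writes $f\,|\,T(m)=\eta^6(z)\eta^6(3z)\,g(m;z)$ with $g$ holomorphic (possible because $\eta^6(z)\eta^6(3z)$ vanishes to order exactly $1$ at both cusps of $\Gamma_0(3)$), divides off the dilated eta-products, and rescales $z\mapsto 12z$ to land in $S_{3m-3}(\Gamma_0(432),\chi_{12})$. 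To repair your proof you would have to redesign your multiplier in the same spirit --- choose exponents depending on $m\bmod 12$ so that the congruent eta-quotient is holomorphic at all cusps and lives at level prime to $m$, then trade $U(m)$ for $T(m)$ --- at which point you will have reproduced the paper's argument.
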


~

\begin{theorem}
\label{cusp form2}
Let $m\geq5$ be a prime, then
\begin{equation}
    \notag
    \sum_{n=0}^\infty b_5\left( \frac{mn-1}{6} \right)q^n\in S_{2m-2}(\Gamma_0(180),\chi_5)_m,
\end{equation}

\noindent
where $\chi_{5}(n)=\left( \frac{n}{5} \right)$.
\end{theorem}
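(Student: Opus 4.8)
The plan is to mimic the structure of Theorem \ref{cusp form1} but with the parameters appropriate to $5$-regular partitions. The starting point is the generating function
\begin{equation}
\notag
\sum_{n=0}^\infty b_5(n)q^n=\prod_{n=1}^\infty\frac{1-q^{5n}}{1-q^n}=\frac{\eta(5z)}{\eta(z)}\cdot q^{-\frac{5-1}{24}}=\frac{\eta(5z)}{\eta(z)}\,q^{-1/6}.
\end{equation}
The shift $m^2-1$ appearing in Proposition \ref{other residue classes2} and the index $(mn-1)/6$ in the statement both point to extracting an arithmetic progression via a $U(m)$-type operator after a suitable twisting, exactly as in Ono's original argument for $p(n)$. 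First I would form an $\eta$-quotient $g(z)$ on $\Gamma_0(180)$ whose Fourier expansion is congruent modulo $m$ to a power of the generating series multiplied by a factor designed to kill the fractional exponent and to raise the weight; the natural candidate is something like $\frac{\eta^m(z)}{\eta(mz)}$ or $\eta^{c}(z)\eta^{d}(5z)/\cdots$, chosen so that conditions (i)--(iii) of Theorem \ref{eta-quotient} hold with level $180=2^2\cdot3^2\cdot5$ and weight $2m-2$.

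The key steps, in order, are as follows. First I would write down an explicit $\eta$-quotient $f(z)$ of weight $2m-2$ and verify via Theorem \ref{eta-quotient} that it lies in $M_{2m-2}(\Gamma_0(180),\chi)$ for the appropriate character; here one checks the two divisibility-by-$24$ conditions on $\sum\delta r_\delta$ and $\sum (N/\delta)r_\delta$. Second, I would use the congruence $\eta^m(z)\equiv\eta(mz)\pmod m$ (a consequence of the Frobenius/freshman's-dream on $\prod(1-q^n)$) to show that modulo $m$ the $q$-expansion of $f(z)$ matches $\sum b_5(n)q^{6n+1}$ up to an overall power of $q$ and an $m$-th power factor that contributes only to progressions I do not care about. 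Third, I would apply the operator $U(m)$: since $\sum a(n)q^n\mapsto\sum a(mn)q^n$, applying $U(m)$ extracts precisely the coefficients indexed by $6(mn)+1$, i.e.\ the values $b_5\left(\frac{mn-1}{6}\right)$ after reindexing, and I would check that $U(m)$ preserves modular forms on $\Gamma_0(180)$ of the same weight (standard, since $5\mid 180$ and $m\nmid 180$ so the level is unchanged). Fourth, to upgrade from $M_{2m-2}$ to $S_{2m-2}$ I would compute the order of vanishing at every cusp $c/d$ with $d\mid 180$ using Theorem \ref{order of cusp}, showing the relevant form vanishes at all cusps; equivalently, after the $U(m)$ reduction one argues that the resulting form is congruent to a cusp form modulo $m$.

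The character $\chi_5(n)=\left(\frac{n}{5}\right)$ should emerge from Theorem \ref{eta-quotient}: with $k=2m-2$ even, $(-1)^k=1$, and the product $\prod_\delta\delta^{r_\delta}$ reduces modulo squares to a power of $5$, so $\chi(n)=\left(\frac{5^{\pm1}}{n}\right)=\left(\frac{5}{n}\right)=\left(\frac{n}{5}\right)$ by quadratic reciprocity (the $2$- and $3$-parts of $\prod\delta^{r_\delta}$ being forced to be squares by the constraints). I expect the main obstacle to be the bookkeeping in the second and third steps: tracking exactly how the fractional $q$-power $q^{-1/6}$, the integral shift built into $f(z)$, and the action of $U(m)$ conspire so that the extracted coefficients are precisely $b_5\!\left(\frac{mn-1}{6}\right)$ with no stray terms, and simultaneously confirming that the spurious $m$-th-power factor $\eta(mz)/\eta^m(z)$ introduced by the freshman's-dream step sits in a progression that $U(m)$ discards modulo $m$. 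The cusp computations in the fourth step are routine applications of Theorem \ref{order of cusp} but numerous, since $180$ has many divisors; these I would organize in a table rather than grind through inline.
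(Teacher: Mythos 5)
Your overall template (an eta-quotient, the Frobenius congruence $\eta(mz)\equiv\eta^m(z)\pmod m$, extraction of an arithmetic progression by $U(m)$, and cusp analysis via Theorems \ref{eta-quotient} and \ref{order of cusp}) matches the paper's, but the proposal breaks down at its central step: the claim that $U(m)$ preserves modular forms on $\Gamma_0(180)$ ``since $5\mid 180$ and $m\nmid 180$ so the level is unchanged'' is backwards. The operator $U(p)$ maps $M_k(\Gamma_0(N),\chi)$ to itself precisely when $p\mid N$; when $p\nmid N$ it only produces a form on $\Gamma_0(Np)$. Since a prime $m\geq 7$ never divides $180$ (and the theorem must hold for all primes $m\geq5$), your third step does not return a form of level $180$, and nothing else in the proposal repairs this. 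The paper's proof is organized precisely to get around this obstruction: it works at level $5$ with the weight-$2m$ form $\eta^{am+1}(5z)\eta^{bm-1}(z)$ (where $m'=m\bmod 6$, $a=5-m'$, $b=m'-1$), and replaces $U(m)$ by the genuine Hecke operator $T(m)$ via the congruence $f\,|\,U(m)\equiv f\,|\,T(m)\pmod m$ (their difference is $\chi(m)m^{2m-1}f(mz)$, which vanishes modulo $m$); $T(m)$, unlike $U(m)$, does preserve $S_{2m}(\Gamma_0(5),\chi_5)$.

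Your handling of the ``spurious factor'' also cannot work as described. The factor $\prod_{n\geq1}(1-q^{5mn})^a(1-q^{mn})^b$ is a power series in $q^m$, so it is not discarded by $U(m)$: it passes through via the identity $\left(F(q)G(q^m)\right)|\,U(m)=\left(F\,|\,U(m)\right)(q)\cdot G(q)$, and afterwards it must be divided out while certifying that the quotient is still holomorphic. The paper secures this by writing $\eta^{am+1}(5z)\eta^{bm-1}(z)\,|\,T(m)=\eta^4(5z)\eta^4(z)\,g(m;z)$ with $g(m;z)\in M_{2m-4}(\Gamma_0(5),\chi_5)$ --- legitimate because $\eta^4(5z)\eta^4(z)$ has order exactly $1$ at every cusp of $\Gamma_0(5)$ --- so that after the division one is left with $\eta^{4-a}(5z)\eta^{4-b}(z)g(m;z)$, holomorphic since $0\leq a,b\leq4$. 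The level $180$, the weight $2m-2$, and the character $\chi_5$ then emerge only at the very end, from the substitution $z\mapsto 6z$ (i.e.\ $q\mapsto q^6$), not from an eta-quotient on $\Gamma_0(180)$ chosen at the outset. Without the $U(m)\equiv T(m)$ device (or an alternative, such as building $m$ into the level and then descending), your plan cannot be completed.
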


~

\begin{proof}[Proof of Theorem \ref{cusp form1}]

We begin with an $\eta$-quotient
\begin{equation}
    \notag
    f(m;z):=\frac{\eta(3z)}{\eta(z)}\eta^a(3mz)\eta^b(mz),
\end{equation}

\noindent
where $m':=(m\ \mathrm{mod}\ 12)$, $a:=9-m'$ and $b:=m'-3$.

It is easy to verify that $f(m;z)\equiv_m\eta^{am+1}(3z)\eta^{bm-1}(z)$ satisfies the conditions of Theorem \ref{eta-quotient}. Moreover, one can compute via Theorem \ref{order of cusp} that $\eta^{am+1}(3z)\eta^{bm-1}(z)$ has the minimal order of vanishing of $(m(3a+b)+2)/24$ at the cusp $\infty$ and $(m(a+3b)-2)/24$ at the cusp $0$.

~

Since $(m(3a+b)+2)/24=(m(12-m')+1)/12>0$ and $(m(a+3b)-2)/24=(mm'-1)/12>0$, $\eta^{am+1}(3z)\eta^{bm-1}(z)\in S_{3m}(\Gamma_0(3),\chi_3)$, where $\chi_3(n)=\left(\frac{n}{3}\right)$. On the other hand,
\begin{equation}
    \notag
    f(m;z)=\sum_{n=0}^\infty b_3(n)q^{n+\frac{m(3a+b)+2}{24}}\cdot\prod_{n=1}^\infty(1-q^{3mn})^a(1-q^{mn})^b.
\end{equation}

\noindent
Thus,
\begin{equation}
    \label{after U(m)}
    \begin{aligned}
    &\ \ \ \ \eta^{am+1}(3z)\eta^{bm-1}(z)\ |\ U(m)\\
    &\equiv_m\left(\sum_{n=0}^\infty b_3(n)q^{n+\frac{m(3a+b)+2}{24}}\ |\ U(m)\right)\cdot\prod_{n=1}^\infty(1-q^{3n})^a(1-q^{n})^b.
    \end{aligned}
\end{equation}

~

\noindent
As for LHS of (\ref{after U(m)}),
$$\sum_{n=0}^\infty b_3(n)q^{n+\frac{m(3a+b)+2}{24}}\ |\ U(m)={\sum_{n\geq0}}^* b_3(n)q^{\frac{24n+m(3a+b)+2}{24m}},$$

\noindent
where ${\sum}^*$ means take integral power coefficients of $q$, i.e.
$$24n+m(3a+b)+2\equiv0\ (\mathrm{mod}\ 24m).$$

\noindent
It is easy to check that $24\,|\,24n+m(3a+b)+2$. Thus the condition becomes $m\,|\,12n+1$.

As for RHS of (\ref{after U(m)}), we have
$$\eta^{am+1}(3z)\eta^{bm-1}(z)\ |\ U(m)\equiv_m\eta^{am+1}(3z)\eta^{bm-1}(z)\ |\ T(m),$$

\noindent
where $T(m)$ denotes usual Hecke operator acting on $S_{3m}(\Gamma_0(3),\chi_3)$.

Now we analyze the $\eta$-product $\eta^6(z)\eta^6(3z)$. By Theorem \ref{eta-quotient} and Theorem \ref{order of cusp}, $\eta^6(z)\eta^6(3z)$ is a cusp form of weight $6$ and level $3$ and has the minimal order of vanishing of $1$ at the two cusps of $\Gamma_0(3)$. Since $\eta(z)$ never vanishes on $\mathcal{H}$, we can write $\eta^{am+1}(3z)\eta^{bm-1}(z)\ |\ T(m)=\eta^6(z)\eta^6(3z)g(m;z)$, where $g(m;z)\in M_{3m-6}(\Gamma_0(3),\chi_3)$.

In summary, we have
\begin{equation}
    \label{b_3(n)}
    \sum_{\genfrac{}{}{0pt}{}{n\geq0}{m|12n+1}} b_3(n)q^{\frac{24n+m(3a+b)+2}{24m}}\equiv_m\frac{\eta^6(z)\eta^6(3z)g(m;z)}{\prod_{n=1}^\infty(1-q^{3n})^a(1-q^{n})^b}.
\end{equation}

Replacing $q$ by $q^{12}$ and then multiplying by $q^{-(3a+b)/2}$ on both sides of (\ref{b_3(n)}), obtaining
$$\sum_{\genfrac{}{}{0pt}{}{n\geq0}{m|12n+1}} b_3(n)q^{\frac{12n+1}{m}}\equiv_m\eta^{6-a}(36z)\eta^{6-b}(12z)g(m;12z),$$

\noindent
namely,
$$\sum_{n=0}^\infty b_3\left(\frac{mn-1}{12}\right)q^n\equiv_m\eta^{6-a}(36z)\eta^{6-b}(12z)g(m;12z).$$

Using Theorem \ref{eta-quotient} and Theorem \ref{order of cusp} again, one can verity that $\eta^{6-a}(36z)\eta^{6-b}(12z)$ is a cusp form of weight $3$ and level $432$ and has the minimal order of vanishing of $m'$ at the cusps $c/d$ if $d=1,2,3,4,6,8,12,16,24,48$ and $12-m'$ if $d$ is other divisor of $432$.

Therefore we obtain
$$\eta^{6-a}(36z)\eta^{6-b}(12z)\in S_3\left(\Gamma_0(432),\chi_4\right),$$

\noindent
where $\chi_4(n)=\left( \frac{-3}{n} \right)$. Together with $g(m;12z)\in M_{3m-6}(\Gamma_0(36),\chi_3)$, we have
\begin{equation}
    \notag
     \sum_{n=0}^\infty b_3\left( \frac{mn-1}{12} \right)q^n\in S_{3m-3}(\Gamma_0(432),\chi_{12})_m.
\end{equation}

\end{proof}

\begin{proof}[Proof of Theorem \ref{cusp form2}]

For a fixed prime $m$, let
\begin{equation}
    \notag
    f(m;z):=\frac{\eta(5z)}{\eta(z)}\eta^a(5mz)\eta^b(mz),
\end{equation}
\noindent
where $m':=(m\ \mathrm{mod}\ 6)$ and $a:=5-m',\ b:=m'-1$. It is easy to show that $\modulo{f(m;z)}{\eta^{am+1}(5z)\eta^{bm-1}(z)}{m}$ and
$$\eta^{am+1}(5z)\eta^{bm-1}(z)\in S_{2m}(\Gamma_0(5),\chi_5),$$

\noindent
where $\chi_5(n)=\bracket{\frac{n}{5}}$. On the other hand,
\begin{equation}
    \notag
    f(m;z)=\sum_{n=0}^\infty b_5(n)q^{\frac{24n+m(5a+b)+4}{24}}\cdot\prod_{n=1}^\infty(1-q^{5mn})^a(1-q^{mn})^b.
\end{equation}
\noindent
Acting the $U(m)$ operator on $f(z)$ and since $\modulo{U(m)}{T(m)}{m}$, obtaining
\begin{equation}
    \label{after U/T}
    \modulo{\sum_{n=0}^\infty b_5(n)q^{\frac{24n+m(5a+b)+4}{24}}\ |\ U(m)}{\frac{\eta^{am+1}(5z)\eta^{bm-1}(z)\ |\ T(m)}{\prod_{n=1}^\infty(1-q^{5n})^a(1-q^{n})^b}}{m},
\end{equation}

\noindent
where $T(m)$ denotes usual Hecke operator acting on $S_{2m}(\Gamma_0(5),\chi_5)$. As for the LHS of (\ref{after U/T}), we have
\begin{equation}
    \notag
    \sum_{n=0}^\infty b_5(n)q^{\frac{24n+m(5a+b)+4}{24}}\ |\ U(m)=\sum_{\genfrac{}{}{0pt}{}{n=0}{m|6n+1}}^\infty b_5(n)q^{\frac{24n+m(5a+b)+4}{24m}}.
\end{equation}

\noindent
Using Theorem \ref{eta-quotient} and \ref{order of cusp}, one can verify that $\eta^4(5z)\eta^4(z)\in S_4(\Gamma_0(5))$ and have the order of $1$ at all cusps. Thus we can write $\eta^{am+1}(5z)\eta^{bm-1}(z)\ |\ T(m)=\eta^4(5z)\eta^4(z)g(m;z)$, where $g(m;z)\in M_{2m-4}(\Gamma_0(5),\chi_5)$. Hence
\begin{equation}
    \notag
    \modulo{\sum_{\genfrac{}{}{0pt}{}{n=0}{m|6n+1}}^\infty b_5(n)q^{\frac{6n+1}{6m}}}{\eta^{4-a}(5z)\eta^{4-b}(z)g(m;z)}{m}.
\end{equation}

\noindent
Replacing $q$ by $q^6$ shows that
\begin{equation}
    \notag
    \modulo{\sum_{\genfrac{}{}{0pt}{}{n=0}{m|6n+1}}^\infty b_5(n)q^{\frac{6n+1}{m}}}{\eta^{4-a}(30z)\eta^{4-b}(6z)g(m;6z)}{m}.
\end{equation}

\noindent
Since $b_5(n)$ vanishes for non-integer $n$, so
\begin{equation}
    \notag
    \modulo{\sum_{n=0}^\infty b_5\bracket{\frac{mn-1}{6}}q^{n}}{\eta^{4-a}(30z)\eta^{4-b}(6z)g(m;6z)}{m}.
\end{equation}

\noindent
Moreover, one can verify that $\eta^{4-a}(30z)\eta^{4-b}(6z)\in S_2(\Gamma_0(180))$. Together with $g(m;6z)\in M_{2m-4}(\Gamma_0(30),\chi_5)$, we have
\begin{equation}
    \notag
     \sum_{n=0}^\infty b_5\left( \frac{mn-1}{6} \right)q^n\in S_{2m-2}(\Gamma_0(180),\chi_{5})_m.
\end{equation}

\end{proof}

We need some important results due to Serre (c.f. \cite[(6.4)]{serre1974divisibilite}), which are the critical factors of the existence of Ramanujan-type congruences.

\begin{theorem}[J.-P. Serre]
\label{Serre's theorem}
The set of primes $l\equiv-1\ (\mathrm{mod}\ Nm)$ such that
$$f\ |\ T(l)\equiv0\ (\mathrm{mod}\ m)$$

\noindent
for each $f(z)\in S_k(\Gamma_0(N),\psi)_m$ has positive density, where $T(l)$ denotes the usual Hecke operator acting on $S_k(\Gamma_0(N),\psi)$.
\end{theorem}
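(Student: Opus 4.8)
The plan is to reduce the statement to the Chebotarev density theorem applied to a single Galois representation of finite image that simultaneously encodes all the Hecke operators $T(l)$ and the residue $l \bmod Nm$. Set $V := S_k(\Gamma_0(N),\psi)_m$, a finite-dimensional $\mathbb{F}_m$-vector space, and let $\mathbb{T} \subseteq \mathrm{End}_{\mathbb{F}_m}(V)$ be the (finite, commutative) $\mathbb{F}_m$-algebra generated by the operators $T(l)$ for $l \nmid Nm$ together with the diamond operators. The key observation is that $f\,|\,T(l) \equiv 0 \pmod m$ \emph{for every} $f \in V$ is equivalent to the single statement $T(l) = 0$ in $\mathbb{T}$; so it suffices to produce a positive-density set of primes $l$ for which $T(l)$ is the zero element of $\mathbb{T}$ and, at the same time, $l \equiv -1 \pmod{Nm}$ (note that the latter already forces $\gcd(l,Nm)=1$).

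First I would invoke Deligne's construction of Galois representations attached to modular forms, in its mod-$m$ form. This yields a continuous representation
\[
\rho : \mathrm{Gal}(\overline{\mathbb{Q}}/\mathbb{Q}) \longrightarrow \mathrm{GL}_2(\mathbb{T}),
\]
unramified outside $Nm$, characterised by $\mathrm{tr}\,\rho(\mathrm{Frob}_l) = T(l)$ and $\det\rho(\mathrm{Frob}_l) = \psi(l)\,l^{k-1}$ in $\mathbb{T}$ for all $l \nmid Nm$. Because $\mathbb{T}$ is finite, the image of $\rho$ is finite. Combining $\rho$ with the cyclotomic character modulo $Nm$ produces $\varrho := \rho \times \chi_{Nm} : \mathrm{Gal}(\overline{\mathbb{Q}}/\mathbb{Q}) \to \mathrm{GL}_2(\mathbb{T}) \times (\mathbb{Z}/Nm)^\times$ with finite image $G$, and for every prime $l \nmid Nm$ the pair $(T(l),\,l \bmod Nm)$ is read off from the conjugacy class of $\varrho(\mathrm{Frob}_l)$, since the trace is conjugation-invariant and the second factor is abelian.

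The decisive computation is at complex conjugation $c$. We may assume $V \neq 0$, so the parity condition $\psi(-1) = (-1)^k$ holds and hence $\det\rho(c) = \psi(-1)(-1)^{k-1} = -1$. Since $c^2 = 1$ we have $\rho(c)^2 = I$, and the Cayley--Hamilton identity $\rho(c)^2 - \mathrm{tr}\,\rho(c)\,\rho(c) + \det\rho(c)\,I = 0$ collapses to $-\,\mathrm{tr}\,\rho(c)\,\rho(c) = 0$; as $\rho(c)$ is invertible this forces $\mathrm{tr}\,\rho(c) = 0$ in $\mathbb{T}$. Moreover $c$ acts on $Nm$-th roots of unity by inversion, so $\chi_{Nm}(c) = -1$. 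Thus the element $\varrho(c) = (\rho(c), -1) \in G$ has trace-component $0$ and cyclotomic-component $-1$.

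Finally I would apply the Chebotarev density theorem to the extension cut out by $\varrho$: the set of primes $l$ with $\varrho(\mathrm{Frob}_l)$ in the conjugacy class $C$ of $\varrho(c)$ has density $|C|/|G| > 0$. For each such $l$ one gets $T(l) = \mathrm{tr}\,\rho(\mathrm{Frob}_l) = \mathrm{tr}\,\rho(c) = 0$ in $\mathbb{T}$ --- equivalently $f\,|\,T(l) \equiv 0 \pmod m$ for all $f \in V$ --- together with $l \equiv \chi_{Nm}(\mathrm{Frob}_l) = -1 \pmod{Nm}$, which is exactly the assertion. The main obstacle is the first step: one must have available the mod-$m$ Galois representation valued in the \emph{whole} Hecke algebra $\mathbb{T}$, not merely the two-dimensional representations attached to individual eigenforms, so that vanishing of the trace of Frobenius genuinely forces $T(l)$ to annihilate the entire, possibly non-semisimple, space $V$. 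Packaging Deligne's eigenform representations into such a $\mathbb{T}$-valued representation (or, in general, into the corresponding Cayley--Hamilton pseudo-representation) is the technical heart of the argument, whereas the complex-conjugation identity and the Chebotarev step are then routine.
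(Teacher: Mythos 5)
The paper never proves this theorem: it is imported verbatim from Serre (item (6.4) of the cited 1974 S\'eminaire Delange--Pisot--Poitou paper), so there is no internal argument to compare yours against. Judged on its own, your reconstruction is, in substance, Serre's argument: reduce the simultaneous vanishing $f\,|\,T(l)\equiv 0 \pmod m$ for all $f$ to the single equation $T(l)=0$ in the finite Hecke algebra $\mathbb{T}\subseteq\mathrm{End}_{\mathbb{F}_m}(V)$, realize $T(l)$ as the trace of $\mathrm{Frob}_l$ in Galois data with finite image, note that oddness forces the trace of complex conjugation to vanish, and apply Chebotarev to the compositum with $\mathbb{Q}(\zeta_{Nm})$ so that the congruence $l\equiv-1\pmod{Nm}$ comes along for free. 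The reframing in your first paragraph (zero in $\mathbb{T}$ versus merely nilpotent) is exactly the right way to isolate the difficulty.

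The one place you claim more than is true is the assertion that Deligne's construction \emph{yields} a continuous representation $\rho:\mathrm{Gal}(\overline{\mathbb{Q}}/\mathbb{Q})\to\mathrm{GL}_2(\mathbb{T})$ with $\mathrm{tr}\,\rho(\mathrm{Frob}_l)=T(l)$. For the full Artinian, possibly non-reduced algebra $\mathbb{T}$, such a genuine two-dimensional representation need not exist; the standard obstruction is a maximal ideal of $\mathbb{T}$ whose residual eigensystem is Eisenstein (reducible), and this is precisely the non-semisimplicity you are trying to defeat, so it cannot be waved through. You flag this yourself as the technical heart, and the repair you allude to can be made precise without any pseudo-representation formalism: let $\mathbb{T}_{\mathbb{Z}_m}$ be the Hecke algebra over $\mathbb{Z}_m$ acting on the lattice of integer-coefficient cusp forms, embed it into $\prod_{f,\lambda}\mathcal{O}_{f,\lambda}$ (the product running over newforms $f$ of level dividing $N$ and primes $\lambda\mid m$ of their coefficient fields), and let $\rho$ be the product of the attached $\lambda$-adic representations, unramified outside $Nm$. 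Then $\mathrm{tr}\,\rho(\mathrm{Frob}_l)=T(l)$ lies in $\mathbb{T}_{\mathbb{Z}_m}$ for every $l\nmid Nm$; since $\mathbb{T}_{\mathbb{Z}_m}$ is closed in the product and Frobenius elements are dense, the entire trace function takes values in $\mathbb{T}_{\mathbb{Z}_m}$, and it is a continuous class function. Reducing modulo the ideal of elements acting as zero on $S_k(\Gamma_0(N),\psi)_m$ gives a class function into a finite set which, by compactness, factors through a finite Galois quotient; its value at complex conjugation is $0$ because each individual $\rho_{f,\lambda}$ is odd --- your Cayley--Hamilton computation, applied factor by factor. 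With that substitution your Chebotarev endgame goes through verbatim and the proof is complete.
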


Now Theorem \ref{infinitely many Ramanujan-type congruences} is an immediately corollary of the next two theorems.

\begin{theorem}
\label{main theorem}
Let $m\geq5$ be a prime. A positive density of the primes $l$ have the property that
\begin{equation}
    \notag
    b_3\left( \frac{mln-1}{12} \right)\equiv0\ (\mathrm{mod}\ m)
\end{equation}

\noindent
for each nonnegative integer $n$ coprime to $l$.
\end{theorem}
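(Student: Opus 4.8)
The plan is to combine Theorem~\ref{cusp form1} with Serre's theorem (Theorem~\ref{Serre's theorem}) to produce a positive-density set of primes $l$ that annihilate the relevant cusp form under the Hecke operator, and then translate that annihilation back into a congruence for $b_3$. By Theorem~\ref{cusp form1} we know that
\begin{equation}
    \notag
    F(z):=\sum_{n=0}^\infty b_3\left( \frac{mn-1}{12} \right)q^n\in S_{3m-3}(\Gamma_0(432),\chi_{12})_m.
\end{equation}
Applying Theorem~\ref{Serre's theorem} with $N=432$, $k=3m-3$ and $\psi=\chi_{12}$, I obtain a positive-density set of primes $l\equiv-1\ (\mathrm{mod}\ 432m)$ with the property that $F\ |\ T(l)\equiv0\ (\mathrm{mod}\ m)$. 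The remaining work is purely formal: I must unwind what $F\ |\ T(l)\equiv0$ says about the coefficients of $F$, and from there extract the stated congruence.

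Concretely, I would write out the action of the Hecke operator $T(l)$ on the $q$-expansion of $F$. For a form of weight $k=3m-3$ and character $\chi_{12}$, the $n$-th coefficient of $F\ |\ T(l)$ is
\begin{equation}
    \notag
    a(ln)+\chi_{12}(l)\,l^{\,k-1}a(n/l),
\end{equation}
where $a(\nu)=b_3\bigl((m\nu-1)/12\bigr)$ and $a(n/l)$ is interpreted as $0$ when $l\nmid n$. The key observation is that $l\equiv-1\ (\mathrm{mod}\ m)$, so $l^{\,k-1}=l^{\,3m-4}\equiv l^{-1}\equiv-1\ (\mathrm{mod}\ m)$ is a unit, but more importantly for indices $n$ coprime to $l$ the second term vanishes identically. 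Hence for such $n$ the vanishing $F\ |\ T(l)\equiv0\ (\mathrm{mod}\ m)$ forces
\begin{equation}
    \notag
    a(ln)=b_3\left(\frac{mln-1}{12}\right)\equiv0\ (\mathrm{mod}\ m),
\end{equation}
which is exactly the claimed congruence.

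**The one point requiring care** is the interaction between the congruence class of $l$ and the index $n$: I want the conclusion for every $n$ coprime to $l$, so I must confirm that the term $\chi_{12}(l)\,l^{\,k-1}a(n/l)$ genuinely drops out, which it does precisely because $l\nmid n$ makes $n/l$ a non-integer and $a(n/l)=0$ by convention. Since Serre's set of admissible $l$ has positive density and each such $l$ yields the congruence for all $n$ coprime to $l$, the theorem follows. I expect the only mild obstacle to be bookkeeping the normalization of $T(l)$ against the weight and nebentypus so that the coefficient formula above is stated correctly; the congruence $l\equiv-1\ (\mathrm{mod}\ m)$ guarantees the eigenvalue-type term is a unit and does not interfere, so no further arithmetic input is needed.
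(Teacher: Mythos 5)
Your proposal is correct and follows essentially the same route as the paper: both apply Theorem~\ref{Serre's theorem} to the cusp form of Theorem~\ref{cusp form1}, expand $F\,|\,T(l)$ coefficientwise, and observe that for $n$ coprime to $l$ the second Hecke term vanishes (the paper phrases this via the convention $b_3(\text{non-integer})=0$, which forces it to note $l\neq m$, while you invoke the standard convention $a(n/l)=0$ for $l\nmid n$; these are equivalent here since Serre's condition $l\equiv-1\ (\mathrm{mod}\ 432m)$ already excludes $l=m$). Your side remark that $l^{3m-4}\equiv-1\ (\mathrm{mod}\ m)$ is correct but not needed.
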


\begin{theorem}
\label{main theorem2}
Let $m\geq5$ be a prime. Then a positive density of primes $l$ have the property that
\begin{equation}
    \notag
    b_5\left( \frac{mln-1}{6} \right)\equiv0\ (\mathrm{mod}\ m)
\end{equation}

\noindent
satisfied for each integer $n$ coprime to $l$.
\end{theorem}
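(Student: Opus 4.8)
The plan is to deduce the statement directly from Theorem \ref{cusp form2} and Serre's theorem (Theorem \ref{Serre's theorem}), in complete parallel with the intended argument for Theorem \ref{main theorem}. First I would set
$$F(z):=\sum_{n=0}^\infty b_5\left(\frac{mn-1}{6}\right)q^n,$$
and recall from Theorem \ref{cusp form2} that $F(z)\in S_{2m-2}(\Gamma_0(180),\chi_5)_m$. Writing $F(z)=\sum_{n\geq1}a(n)q^n$ with $a(n)=b_5\left(\frac{mn-1}{6}\right)$, I would apply Serre's theorem with $N=180$, $\psi=\chi_5$ and weight $k=2m-2$: a positive density of primes $l\equiv-1\ (\mathrm{mod}\ 180m)$ have the property that $F\,|\,T(l)\equiv0\ (\mathrm{mod}\ m)$.

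Next I would make the Hecke action explicit. Any prime $l\equiv-1\ (\mathrm{mod}\ 180m)$ satisfies $(l,180)=1$, so the usual formula applies and the coefficient of $q^n$ in $F\,|\,T(l)$ equals
$$a(ln)+\chi_5(l)\,l^{2m-3}\,a(n/l),$$
with the convention $a(n/l)=0$ whenever $l\nmid n$. Since $F\,|\,T(l)\equiv0\ (\mathrm{mod}\ m)$, every such coefficient vanishes modulo $m$. Restricting to $n$ coprime to $l$ forces $l\nmid n$, hence the second term drops out and we obtain
$$a(ln)=b_5\left(\frac{mln-1}{6}\right)\equiv0\ (\mathrm{mod}\ m),$$
which is precisely the claim (the values at non-integer or negative arguments being zero by convention).

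I do not expect a genuine obstacle: all of the arithmetic substance has already been absorbed into Theorem \ref{cusp form2}, and what remains is a formal consequence of the $T(l)$-expansion together with the density statement of Serre. The only points deserving a line of care are verifying $(l,180)=1$ so that the standard Hecke formula is valid, and observing that the coprimality hypothesis $(n,l)=1$ is exactly what is needed to discard the $a(n/l)$ contribution, so that no control over $\chi_5(l)\,l^{2m-3}$ is required. If anything, the real subtlety lies upstream in Theorem \ref{cusp form2} (the choice of auxiliary $\eta$-quotient and the cusp-order computations via Theorem \ref{order of cusp}), which here we are entitled to take as given.
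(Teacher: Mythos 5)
Your proposal is correct and follows essentially the same route as the paper: invoke Theorem \ref{cusp form2} to place the generating function in $S_{2m-2}(\Gamma_0(180),\chi_5)_m$, apply Serre's theorem to get a positive density of primes $l$ annihilating it under $T(l)$, and read off the $q^n$-coefficient of $F\,|\,T(l)$ for $(n,l)=1$ to kill the $a(n/l)$ term. Your use of the standard convention $a(n/l)=0$ for $l\nmid n$ (with $l\equiv-1\ (\mathrm{mod}\ 180m)$ guaranteeing $(l,180)=1$ and $l\neq m$) is in fact slightly cleaner than the paper's phrasing, which discards that term by noting $b_5\left((mn-l)/(6l)\right)$ has a non-integer argument when $l\neq m$.
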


~

\begin{proof}[Proof of Theorem \ref{main theorem}]Let
\begin{equation}
    \notag
    F(m;z)=\sum_{n=0}^\infty b_3\left( \frac{mn-1}{12} \right)q^n,
\end{equation}

\noindent
then $F(m;z)\in S_{3m-3}(\Gamma_0(432),\chi_{12})_m$.

For a fix prime $m\geq5$, let $S(m)$ denote the of primes $l$ such that
$$f\ |\ T(l)\equiv0\ (\mathrm{mod}\ m)$$

\noindent
for each $f\in S_{3m-3}(\Gamma_0(432),\chi_{12})$. By Theorem \ref{Serre's theorem}, $S(m)$ contains a positive density of primes. So if $l\in S(m)$, we have
$$F(m;z)\ |\ T(l)\equiv0\ (\mathrm{mod}\ m).$$

\noindent
Then by the theory of Hecke operator we have
\begin{equation}
    \notag
    F(m;z)\ |\ T(l)=\sum_{n=0}^\infty\left( b_3\left( \frac{mln-1}{12} \right)+\left(\frac{3}{l}\right)l^{3m-4}b_3\left( \frac{mn-l}{12l} \right) \right)q^n\equiv0\ (\mathrm{mod}\ m).
\end{equation}

Since $b_3(n)$ vanishes when $n$ is not an integer, $b_3\left((mn-l)/12l\right)=0$ for each $n$ coprime to $l$ and $l\neq m$. Thus
$$b_3\left(\frac{mln-1}{12}\right)\equiv 0\ (\mathrm{mod}\ m)$$

\noindent
satisfied for each integer $n$ coprime to $l$ with $l\neq m$. Moreover, the set of such primes $l$ has a positive density of primes.

\end{proof}

\begin{proof}[Proof of Theorem \ref{main theorem2}]

Let
\begin{equation}
    \notag
    F(m;z)=\sum_{n=0}^\infty b_5\left( \frac{mn-1}{6} \right)q^n\in S_{2m-2}(\Gamma_0(180),\chi_5)_m.
\end{equation}

\noindent
By Theorem \ref{Serre's theorem}, the set of primes $l$ such that
$$ F(m;z)\ |\ T(l)\equiv0\ (\mathrm{mod}\ m)$$

\noindent
has positive density, where $T(l)$ denotes Hecke operator acting on $S_{2m-2}(\Gamma_0(180),\chi_5)$. Moreover, by the theory of Hecke operator, we have
\begin{equation}
    \notag
    \sum_{n=0}^\infty F(m;z)\ |\ T(l)=\sum_{n=0}^\infty \bracket{b_5\left( \frac{mln-1}{6} \right)+\bracket{\frac{l}{5}}l^{2m-3}b_5\left( \frac{mn-l}{6l} \right)}q^n.
\end{equation}

Since $b_5(n)$ vanishes for non-integer $n$, $b_5((mn-l)/6l)=0$ when $(n,l)=1$ and $l\neq m$. Thus we obtain
\begin{equation}
    \notag
    \modulo{b_5\bracket{\frac{mln-1}{6}}}{0}{m}
\end{equation}

\noindent
satisfied for each integer $n$ with $(n,l)=1$ and $l\neq m$. Moreover, the set of such primes $l$ has a positive density of primes.

\end{proof}

Since the number of selections of $l$ is infinite, choose $l>3$. Replacing $n$ by $12nl+ml+12$, then we have $b_3(ml^2n+ml+(m^2l^2-1)/12)\equiv0\ (\mathrm{mod}\ m)$ satisfied for each nonnegative integer $n$. Similar way can be applied to $b_5(n)$. Hence we obtain Theorem \ref{infinitely many Ramanujan-type congruences}. Moreover, since the choices of $l$ is infinite, together with the Chinese Remainder Theorem and previous results, we obtain

~

\begin{corollary}
If $m$ is a squarefree integer, then there are infinitely many Ramanujan-type congruences of $b_3(n)$ modulo $m$; if $k$ is a squarefree integer coprime to $3$, then there are infinitely many Ramanujan-type congruences of $b_5(n)$ modulo $k$.
\end{corollary}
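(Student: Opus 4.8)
The plan is to reduce the squarefree case to the prime case already established and then splice the resulting congruences together with the Chinese Remainder Theorem. Write $m=p_1p_2\cdots p_r$ as a product of distinct primes. For each prime factor $p_i\geq5$, Theorem \ref{main theorem} produces a positive density --- in particular an infinite set --- of primes $l$ with
$$b_3\!\left(\frac{p_i\,l\,n-1}{12}\right)\equiv0\ (\mathrm{mod}\ p_i)\qquad\text{whenever }(n,l)=1.$$
Specializing $n$ exactly as in the passage deriving Theorem \ref{infinitely many Ramanujan-type congruences} (replace $n$ by $12nl+p_il+12$ and insist $l>3$) converts this into a genuine progression congruence
$$b_3\!\left(p_il^2n+p_il+\frac{p_i^2l^2-1}{12}\right)\equiv0\ (\mathrm{mod}\ p_i),\qquad n\geq0,$$
i.e.\ the vanishing of $b_3\ (\mathrm{mod}\ p_i)$ on a residue class modulo $p_il^2$. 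For the remaining possible factors $p_i\in\{2,3\}$ I would instead invoke the known Ramanujan-type congruences for $b_3$ modulo $2$ and $3$ recorded in \cite{keith2022parity} and \cite{lovejoy20013}, each of which likewise pins $b_3$ to $0$ on a residue class modulo some fixed integer.

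The second step is the gluing. For each $p_i\geq5$ select the auxiliary prime $l_i$ from the density set of Theorem \ref{main theorem}, choosing the $l_i$ pairwise distinct, distinct from every $p_j$, all $\geq5$, and coprime to the fixed moduli of the small-prime congruences used for $2$ and $3$. Then the moduli $p_il_i^2$ belonging to different factors are pairwise coprime, so the residue classes they cut out --- together with the fixed classes coming from the factors $2,3$ --- admit a common refinement: by the Chinese Remainder Theorem their intersection is a single arithmetic progression $An+B$, where $A$ is the least common multiple of all the moduli involved. On this progression $b_3(An+B)\equiv0\ (\mathrm{mod}\ p_i)$ holds for every $i$ at once, and since the $p_i$ are distinct primes this forces $b_3(An+B)\equiv0\ (\mathrm{mod}\ m)$. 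Because each density set of auxiliary primes is infinite, different tuples $(l_1,\dots,l_r)$ yield progressions of different modulus $A$, hence infinitely many distinct Ramanujan-type congruences modulo $m$. The argument for $b_5$ is identical, using Theorem \ref{main theorem2} (with $12$ replaced by $6$, so the relevant modulus is $p_il^2$) for the factors $p_i\geq5$ and the parity results of \cite{calkin2008divisibility,hirschhorn2010elementary} for a factor of $2$; the hypothesis that $k$ is coprime to $3$ simply excludes the one prime modulo which nothing is known for $b_5$.

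The step I expect to be the genuine obstacle --- and the reason the hypotheses constrain which prime factors are allowed --- is guaranteeing that the progressions are mutually compatible, so that the Chinese Remainder Theorem really applies. Among the primes $p_i\geq5$ this is no obstruction at all: the freedom granted by Serre's positive-density statement (Theorem \ref{Serre's theorem}) lets me force the moduli $p_il_i^2$ to be pairwise coprime, and coprime moduli never clash. The delicate work is the interface with the externally supplied congruences modulo $2$ and $3$ (resp.\ modulo $2$ for $b_5$): one must check their fixed moduli can be taken coprime to the chosen $l_i$ and to the $p_j$, and compatible with one another, before the common refinement exists, and one must ensure infinitude persists when $m$ has no factor $\geq5$ (where the infinitely many congruences can no longer be harvested from the density set and must instead be supplied by the cited works). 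It is exactly the availability of these small-prime congruences in the literature that dictates the shape of the corollary --- all squarefree $m$ for $b_3$, but only squarefree $k$ coprime to $3$ for $b_5$.
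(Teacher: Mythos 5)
Your proposal is essentially identical to the paper's own (one-sentence) proof of this corollary: the paper likewise obtains it by specializing Theorems \ref{main theorem} and \ref{main theorem2} to each prime factor $p\geq5$ (via the substitution producing $b_3\bigl(pl^2n+pl+\tfrac{p^2l^2-1}{12}\bigr)\equiv0\ (\mathrm{mod}\ p)$ and its $b_5$ analogue), invoking the previously known congruences for $b_3$ modulo $2$ and $3$ (resp.\ $b_5$ modulo $2$ and $5$) for the small prime factors --- which is exactly why the hypothesis for $b_5$ excludes the factor $3$ --- and gluing the resulting arithmetic progressions with the Chinese Remainder Theorem, the infinitude coming from the infinitely many admissible auxiliary primes $l$. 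If anything, your write-up is more careful than the paper's, which neither addresses the compatibility of the moduli being glued nor spells out which ``previous results'' are meant to supply the congruences modulo $2$ and $3$.
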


\section{Distribution on nonzero residues}

Following Lovejoy \cite{lovejoy2001divisibility}, we need the following theorem due to Serre \cite{serre1974divisibilite}.

\begin{theorem}[J.-P. Serre]
\label{Serre's theorem v2}
The set of primes $l\equiv1\ (\mathrm{mod}\ Nm)$ such that
$$a(nl^r)\equiv(r+1)a(n)\ (\mathrm{mod}\ m)$$

\noindent
for each $f(z)=\sum_{n=0}^\infty a(n)q^n\in S_k(\Gamma_0(N),\psi)_m$ has positive density, where $r$ is a positive integer and $n$ is coprime to $l$.
\end{theorem}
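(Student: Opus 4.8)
The plan is to reduce the asserted congruence to the action of a single Hecke operator and then to produce the required primes through the Chebotarev density theorem. Write $V := S_k(\Gamma_0(N),\psi)_m$, a finite-dimensional $\mathbb{F}_m$-vector space on which each $T(l)$ with $l\nmid Nm$ acts. First I would record the Hecke recursion: for $f=\sum a(n)q^n\in V$ the $n$-th coefficient of $f\,|\,T(l)$ equals $a(ln)+\psi(l)l^{k-1}a(n/l)$. From this one checks by induction on $r$ that whenever a prime $l$ satisfies the two conditions $\psi(l)l^{k-1}\equiv 1\ (\mathrm{mod}\ m)$ and $f\,|\,T(l)\equiv 2f\ (\mathrm{mod}\ m)$ for every $f\in V$ (that is, $T(l)$ acts as the scalar $2$ on $V$), the claimed identity $a(nl^r)\equiv(r+1)a(n)\ (\mathrm{mod}\ m)$ holds for all $r\ge 1$ and all $n$ coprime to $l$. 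Indeed, for $(n,l)=1$ the recursion gives $a(ln)=2a(n)$, and the two conditions turn the recursion into $a(l^{r+1}n)=2a(l^{r}n)-a(l^{r-1}n)$, a linear recurrence with characteristic polynomial $(X-1)^2$ whose solution with initial data $a(n),\,2a(n)$ is $(r+1)a(n)$. This step is purely formal.

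It then remains to exhibit a positive density of primes $l$ meeting both conditions. The first is automatic once $l\equiv 1\ (\mathrm{mod}\ Nm)$, since then $\psi(l)=\psi(1)=1$ and $l^{k-1}\equiv 1\ (\mathrm{mod}\ m)$. For the second I would use that $\{T(l):l\nmid Nm\}$ is a finite commutative family in $\mathrm{End}_{\mathbb{F}_m}(V)$ and that $l\mapsto T(l)$ is governed by Frobenius: attaching to each system of Hecke eigenvalues occurring in $V\otimes\overline{\mathbb{F}_m}$ its mod-$m$ Galois representation (Deligne--Serre) produces a finite Galois extension $K/\mathbb{Q}$, containing $\mathbb{Q}(\zeta_{Nm})$ and unramified outside $Nm$, for which $T(l)$ depends only on the class of $\mathrm{Frob}_l$ in $\mathrm{Gal}(K/\mathbb{Q})$. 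On each two-dimensional constituent the characteristic polynomial of $\mathrm{Frob}_l$ is $X^2-a_lX+\psi(l)l^{k-1}$, so by the Eichler--Shimura relation $T(l)$ corresponds to $\mathrm{Frob}_l+\psi(l)l^{k-1}\mathrm{Frob}_l^{-1}$ on the relevant cohomology. For the primes $l$ that split completely in $K$ we then have $\mathrm{Frob}_l=1$, so simultaneously $l\equiv 1\ (\mathrm{mod}\ Nm)$ and, granting the operator identification discussed below, $T(l)=1+1=2$ on all of $V$; by Chebotarev these primes have density $1/[K:\mathbb{Q}]>0$, which yields the theorem. This parallels the mechanism behind Theorem \ref{Serre's theorem}, where one instead selects the Frobenius class of complex conjugation, forcing $l\equiv -1$ and $a_l=0$.

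The step I expect to be the main obstacle is exactly the passage from the eigenvalue statement ``$a_l=2$ on every Jordan--H\"older constituent of $V$'' to the operator statement ``$T(l)=2$ on the whole, possibly non-semisimple, module $V$''. Controlling the non-semisimple part forces one to work not with individual eigenforms but with Deligne's Galois representation on an integral lattice in the cohomology of the relevant Kuga--Sato variety (equivalently, with the full Hecke module), and to invoke the Eichler--Shimura congruence relation in the precise form realising $T(l)$ as $\mathrm{Frob}_l+\psi(l)l^{k-1}\mathrm{Frob}_l^{-1}$. This is the technical heart of Serre's argument, while everything else is bookkeeping layered on top of the Chebotarev density theorem.
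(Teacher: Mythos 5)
You should first be aware that the paper contains no proof of Theorem \ref{Serre's theorem v2} at all: it is quoted from Serre \cite[(6.4)]{serre1974divisibilite}, so your attempt can only be compared with Serre's own argument and its standard expositions. Your opening reduction is correct and is exactly the right first step: if $l\equiv1\pmod{Nm}$ then $\psi(l)l^{k-1}\equiv1\pmod m$, and if in addition $T(l)$ acts as the scalar $2$, the Hecke recursion gives $a(l^{r+1}n)\equiv 2a(l^rn)-a(l^{r-1}n)\pmod m$ for $(n,l)=1$, whence $a(l^rn)\equiv(r+1)a(n)\pmod m$ by induction. The Chebotarev skeleton (finitely many mod-$m$ eigensystems, attached Galois representations unramified outside $Nm$, a compositum $K\supseteq\mathbb{Q}(\zeta_{Nm})$, completely split primes) is also the correct mechanism and is how the positive density arises.

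The step you flag, however, is a genuine gap and not a deferrable technicality, and it is the crux of the theorem. The Galois representations of Deligne--Serre see only eigensystems, i.e.\ the semisimplification of the Hecke module, so ``every eigenvalue of $T(l)$ on $V\otimes\overline{\mathbb{F}}_m$ equals $2$'' yields only $T(l)=2+(\text{nilpotent})$, which is strictly weaker than $f\,|\,T(l)\equiv2f$ for every $f$; note that the theorem really needs more than eigenvalue data, since for a generalized eigenvector $f$ the failure $g=f\,|\,T(l)-2f\neq0$ would have to have vanishing coefficients at all indices coprime to $l$, which the eigenvalue statement does not provide. Your proposed repair---$\mathrm{Frob}_l$ acting trivially on an integral lattice in Kuga--Sato cohomology together with the Eichler--Shimura relation $T(l)=\mathrm{Frob}_l+\psi(l)l^{k-1}\mathrm{Frob}_l^{-1}$---is asserted rather than executed, and it conceals a serious unresolved issue: $S_k(\Gamma_0(N),\psi)_m$ is defined via reductions of $q$-expansions, and there is no off-the-shelf Hecke-equivariant identification of this space with a lattice in mod-$m$ \'etale cohomology (an integral comparison problem with torsion and period obstructions, aggravated here since the relevant weights $3m-3$ and $2m-2$ exceed $m$). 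Serre's actual argument sidesteps non-semisimplicity entirely by never leaving characteristic zero: since the $T(l)$ with $l\nmid N$ are normal for the Petersson product, any integer-coefficient $f$ is a combination $\sum_j c_jg_j$ of genuine eigenforms, with the $c_j$ algebraic of $\mathfrak{m}$-adic denominator bounded by a fixed power $\mathfrak{m}^A$ (Cramer's rule against a fixed basis); one then demands $\mathrm{Frob}_l=1$ not in the mod-$\mathfrak{m}$ representations but in the finite-image representations modulo $\mathfrak{m}^{A+1}$ attached to the $g_j$ (and $l\equiv1\pmod{Nm^{A+1}}$), so that each $a_{g_j}(l)-2$ is divisible by $\mathfrak{m}^{A+1}$, absorbing the denominators and giving $f\,|\,T(l)\equiv2f\pmod m$ on the nose. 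Replacing your cohomological black box by this semisimplicity-plus-higher-congruence device is what is needed to close the proof.
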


~

Here we introduce a theorem of Sturm(Theorem 1 of \cite{sturm1987congruence}), which provide a useful criterion for deciding when modular forms with integer coefficients are congruent to zero modulo a prime via finite computation.

\begin{theorem}[J. Sturm]
\label{Sturm's theorem}
Suppose $f(z)=\sum_{n=0}^\infty a(n)q^n\in M_k(\Gamma_0(N),\chi)_m$ such that
$$a(n)\equiv0\ (\mathrm{mod}\ m)$$

\noindent
for all $n\leq \frac{kN}{12}\prod_{p|N}\left( 1+\frac1p \right)$. Then $a(n)\equiv0\ (\mathrm{mod}\ m)$ for all $n\in\mathbb{Z}$.
\end{theorem}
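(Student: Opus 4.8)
The plan is to deduce the general bound from the case of the full modular group $SL_2(\mathbb{Z})$, where the valence formula controls the order of vanishing at the unique cusp. Set $\mu:=[SL_2(\mathbb{Z}):\Gamma_0(N)]=N\prod_{p\mid N}\bigl(1+\tfrac1p\bigr)$, so that the stated bound is exactly $B=\tfrac{k\mu}{12}$, i.e. one twelfth of the weight of a form of weight $k$ induced up to $SL_2(\mathbb{Z})$. Concretely, I would form the norm $g:=\prod_{\gamma}f|_k\gamma$, the product of $f$ over a full set of representatives $\gamma$ for $\Gamma_0(N)\backslash SL_2(\mathbb{Z})$. A direct computation with the slash action shows $g|_{k\mu}\delta=\epsilon(\delta)\,g$ for $\delta\in SL_2(\mathbb{Z})$, where $\epsilon$ collects the values $\chi(d)$ produced by rewriting $\gamma\delta$ in terms of the chosen representatives; since $\epsilon$ is a character of $SL_2(\mathbb{Z})$ it has some order $t\mid12$, and after replacing $g$ by $g^{t}$ I may assume $g\in M_{W}(SL_2(\mathbb{Z}))$ with trivial multiplier, $W=t\,k\mu$, and with algebraic integer Fourier coefficients. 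Because the identity coset contributes the factor $f$ while every other factor $f|_k\gamma$ is holomorphic at $\infty$ (as $f$ is holomorphic at all cusps of $\Gamma_0(N)$), the $q$-order obeys $\operatorname{ord}_q(g)\ge t\cdot\operatorname{ord}_q(f)$.

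Next I would prove the statement for $SL_2(\mathbb{Z})$: if $h\in M_{w}(SL_2(\mathbb{Z}))$ has algebraic integer coefficients and does not vanish modulo a prime $\mathfrak m$ lying over $m$, then its first coefficient not divisible by $\mathfrak m$ occurs at index at most $w/12$. For $m\ge5$ this rests on the integral structure theorem $\bigoplus_{w}M_{w}(SL_2(\mathbb{Z}))_{\mathbb{Z}_{(m)}}=\mathbb{Z}_{(m)}[E_4,E_6]$: in each weight the monomial basis can be echelonized against the $q$-expansion using $\Delta=q\prod_{n\ge1}(1-q^n)^{24}$, whose leading coefficient is a unit, so the linear map sending a form to its first $\dim M_{w}(SL_2(\mathbb{Z}))$ Fourier coefficients remains injective after reduction modulo $\mathfrak m$. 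Since $\dim M_{w}(SL_2(\mathbb{Z}))-1\le w/12$, a reduction vanishing to order exceeding $w/12$ must be identically zero.

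Finally I would combine the two steps. Localizing $\mathbb{Z}[\zeta_N]$ at $\mathfrak m$ gives a discrete valuation ring, so the $\mathfrak m$-content of a power series is multiplicative (Gauss's lemma); since the factors $f|_k\gamma$ have coefficients in $\mathbb{Q}(\zeta_N)$ and are permuted by the combined geometric and Galois action, they all share the content of $f$. Assuming for contradiction that $f\not\equiv0\ (\mathrm{mod}\ m)$ while $a(n)\equiv0\ (\mathrm{mod}\ m)$ for every $n\le B$, each factor then has content zero, the reduction of the norm equals the product of the reduced factors, and the first non-divisible coefficient of $f$ occurs at some index $n_0>B$. Hence $\operatorname{ord}_q(g\bmod\mathfrak m)\ge t\,n_0$, whereas the previous paragraph applied to $g$ gives $\operatorname{ord}_q(g\bmod\mathfrak m)\le W/12=t\,B$; as $n_0>B$ these are incompatible, forcing $f\equiv0\ (\mathrm{mod}\ m)$. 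The main obstacle is the bookkeeping in this last reduction: making the norm rigorous (cusp widths, integrality, and the cyclotomic coefficient fields of the conjugate expansions $f|_k\gamma$) and confirming that these conjugates share the $\mathfrak m$-content of $f$, so that $g\equiv0$ genuinely forces $f\equiv0$. The hypothesis $m\ge5$ enters through the $SL_2(\mathbb{Z})$ step, where it guarantees that $E_4$ and $E_6$ generate the forms integrally; the primes $m=2,3$ require a separate treatment.
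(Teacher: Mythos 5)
First, a point of reference: the paper does not prove this statement at all --- it is imported verbatim as Theorem 1 of \cite{sturm1987congruence} --- so your attempt can only be measured against Sturm's own argument. That argument is in fact the one you chose: form the norm over $\Gamma_0(N)\backslash SL_2(\mathbb{Z})$, kill the resulting character of $SL_2(\mathbb{Z})$ by raising to a power $t\mid 12$, and finish with an echelonized integral basis at level one. Your level-one step is correct (and the restriction to $m\ge 5$ there is unnecessary: the Victor Miller basis of $M_w(SL_2(\mathbb{Z}))$ is integral in every weight, so the triangularity argument works for $m=2,3$ as well), and the character bookkeeping via the abelianization of $SL_2(\mathbb{Z})$ is fine.

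The genuine gap is exactly at the step you flag as the ``main obstacle,'' and the mechanism you propose for it does not work as stated. You claim each factor $f|_k\gamma$ has the same $\mathfrak{m}$-content as $f$ ``since the factors are permuted by the combined geometric and Galois action.'' But Galois conjugation does not preserve $\mathfrak{m}$-adic valuations: for $\sigma\in\operatorname{Gal}(\mathbb{Q}(\zeta_N)/\mathbb{Q})$ one has $v_{\mathfrak{m}}(x^{\sigma})=v_{\sigma^{-1}\mathfrak{m}}(x)$, so permuting the factors by Galois only constrains the multiset of contents as the prime ranges over \emph{all} primes above $m$; it does not equalize contents at the fixed prime $\mathfrak{m}$. (Equality of contents is in fact true when $m\nmid N$, but it follows from the $q$-expansion principle and the irreducibility of $X_0(N)$ in characteristic $m$ --- a much deeper input than the one you invoke.) Fortunately, the proof does not need equal contents; it needs two things. (i) Shimura's integrality lemma: each conjugate $f|_k\gamma$ has coefficients in $N^{-s}\mathbb{Z}[\zeta_N]$ for some fixed $s$, so that when $\gcd(m,N)=1$ every content is $\ge 0$ and Gauss's lemma applies in the discrete valuation ring $R=\mathbb{Z}[\zeta_N,1/N]_{\mathfrak{m}}$; this lemma is the actual content of the reduction step and cannot be left as bookkeeping. (ii) A normalization replacing your equal-content claim: writing the norm as $f\cdot H$ with $H=\prod_{\gamma\neq 1}f|_k\gamma$ and $c=v_{\mathfrak{m}}(\mathrm{content}(H))\ge 0$, work with $(fH/\pi^{c})^{t}$ for a uniformizer $\pi$. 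This form has coefficients in $R$, its reduction equals $\bar f^{\,t}$ times a nonzero series with nonnegative order in $q^{1/N}$, hence is nonzero of $q$-order exceeding $W/12$, and the level-one bound then gives the contradiction. Note finally that both (i) and your assertion that $g$ has ``algebraic integer Fourier coefficients'' silently require $\gcd(m,N)=1$ (Sturm's original hypothesis); the statement as quoted carries no such restriction, and removing it needs a further argument, though this is harmless for the paper's applications.
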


~

\begin{proof}[Proof of Theorem \ref{other residue classes}]

If there is one $k\in\mathbb{Z}$ such that
$$b_3\left( mk+\frac{m^2-1}{12} \right)\equiv e\not\equiv0\ (\mathrm{mod}\ m),$$

\noindent
let $s=12k+m$. Since $b_3(n)$ vanishes for negative $n$, we have $mk+\frac{m^2-1}{12}\geq0$. Hence $s=12k+m>0$ and
$$b_3\left(\frac{ms-1}{12}\right)=b_3\left( mk+\frac{m^2-1}{12} \right)\equiv e\ (\mathrm{mod}\ m).$$

\noindent
For a fix prime $m\geq5$, let $R(m)$ denote the set of primes $l$ such that
$$a(nl^r)\equiv(r+1)a(n)\ (\mathrm{mod}\ m)$$

\noindent
for each $f(z)=\sum_{n=0}^\infty a(n)q^n\in S_{3m-3}(\Gamma_0(432),\chi_{12})_m$, where $r$ is a positive integer and $n$ is coprime to $l$. By the proof of Theorem \ref{main theorem} we have $\sum_{n=0}^\infty b_3\left(\frac{mn-1}{12}\right)q^n\in S_{3m-3}(\Gamma_0(432),\chi_{12})_m$. Since $R(m)$ is infinite by Theorem \ref{Serre's theorem v2}, choose $l\in R(m)$ such that $l>s$, then
$$b_3\left(\frac{ml^rs-1}{12}\right)\equiv(r+1)b_3\left(\frac{ms-1}{12}\right)\equiv(r+1)e\ (\mathrm{mod}\ m).$$

\noindent
Now we fix $l$, choose $\rho\in R(m)$ such that $\rho>l$, then
\begin{equation}
    \label{rho}
    b_3\left(\frac{m\rho n-1}{12}\right)\equiv2b_3\left(\frac{mn-1}{12}\right)\ (\mathrm{mod}\ m)
\end{equation}

\noindent
satisfied for each $n$ coprime to $\rho$. For each $i=1,2,\cdots,m-1$, let $r_i\equiv i(2e)^{-1}-1\ (\mathrm{mod}\ m)$ and $r_i>0$. Let $n=l^{r_i}s$ in (\ref{rho}), we obtain
$$b_3\left(\frac{m\rho l^{r_i}s-1}{12}\right)\equiv2b_3\left(\frac{ml^{r_i}s-1}{12}\right)\equiv2(r_i+1)e\equiv i\ (\mathrm{mod}\ m).$$

Since the variables except $\rho$ are fixed, it suffice to prove that the estimate of the choices of $\rho\gg X/\log X$ and which is easily derived from Theorem \ref{Serre's theorem v2} and the Prime Number Theorem.

~

Moreover, by Sturm's Theorem, if $b_3\left( \frac{mn-1}{12} \right)\equiv0\ (\mathrm{mod}\ m)$ for each $n\leq216(m-1)$, then $b_3\left( \frac{mn-1}{12} \right)\equiv0\ (\mathrm{mod}\ m)$ for all $n\in\mathbb{Z}$. Since $b_3(n)$ vanishes if $n$ is not an integer, it suffice to compute those $n$ of the form $12j+m$ for $12j+m\leq216(m-1)$. This implies $j<18(m-1)$. In addition,
$$b_3\left( \frac{m(12j+m)-1}{12} \right)=b_3\left( mj+\frac{m^2-1}{12} \right).$$

\noindent
Thus if such $k$ exist, then $k<18(m-1)$.

\end{proof}

~

\begin{proof}[Proof of Theorem \ref{other residue classes2}]
The proof is similar to the proof above so we omit it.
\end{proof}

~

\section{Examples of Ramanujan-type congruences}

\label{examples}

\noindent
By Theorem \ref{Sturm's theorem} we find that
$$\sum_{n=0}^\infty b_3\left(\frac{mn-1}{12}\right)q^n\ |\ T(l)\equiv0\ (\mathrm{mod}\ m)$$

\noindent
for the pairs $(m,l)=(5,61)$, $(7,71)$, $(11,12553)$. An elementary computation yields that

~

\noindent
\begin{proposition}
\begin{equation}
    \notag
    b_3(18605n+127)\equiv0\ (\mathrm{mod}\ 5),
\end{equation}
\begin{equation}
    \notag
    b_3(35287n+207)\equiv0\ (\mathrm{mod}\ 7),
\end{equation}
\begin{equation}
    \notag
    b_3(1733355899n+126576)\equiv0\ (\mathrm{mod}\ 11).
\end{equation}
\end{proposition}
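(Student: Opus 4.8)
The plan is to start from the Hecke-operator vanishing asserted immediately above and convert it into explicit arithmetic progressions by an elementary change of variable. Write $F(m;z)=\sum_{n=0}^\infty b_3\!\left(\frac{mn-1}{12}\right)q^n\in S_{3m-3}(\Gamma_0(432),\chi_{12})_m$, as in Theorem \ref{cusp form1}. For each of the three pairs $(m,l)$ the relation $F(m;z)\,|\,T(l)\equiv 0\ (\mathrm{mod}\ m)$ is the input I take for granted, since it is the content of the sentence preceding the statement (verified through Sturm's Theorem \ref{Sturm's theorem}; note $T(l)$ preserves the space because $61,71,12553$ are all coprime to $432$). Exactly as in the proof of Theorem \ref{main theorem}, the $n$-th coefficient of $F(m;z)\,|\,T(l)$ equals
$$b_3\!\left(\frac{mln-1}{12}\right)+\left(\frac{3}{l}\right)l^{3m-4}b_3\!\left(\frac{mn-l}{12l}\right),$$
and since $b_3$ vanishes at non-integral arguments the second term disappears whenever $(n,l)=1$. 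Hence $b_3\!\left(\frac{mln-1}{12}\right)\equiv 0\ (\mathrm{mod}\ m)$ for every $n$ coprime to $l$.

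Next I would repackage this as a single congruence of the shape $b_3(An+B)\equiv0\ (\mathrm{mod}\ m)$. The quantity $(mln-1)/12$ is a nonnegative integer precisely when $mln\equiv 1\ (\mathrm{mod}\ 12)$, so I first solve $ml\,n_0\equiv 1\ (\mathrm{mod}\ 12)$ for the residue $n_0$ of $n$ modulo $12$. To force both $n\equiv n_0\ (\mathrm{mod}\ 12)$ and $(n,l)=1$ simultaneously along one progression, I substitute $n=12lk+\beta$, where $\beta$ is the least positive integer with $\beta\equiv n_0\ (\mathrm{mod}\ 12)$ and $(\beta,l)=1$; then $n\equiv\beta\ (\mathrm{mod}\ 12l)$ guarantees $(n,l)=(\beta,l)=1$ for every $k$. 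A direct computation gives
$$\frac{ml(12lk+\beta)-1}{12}=ml^2\,k+\frac{ml\beta-1}{12},$$
so the resulting congruence is $b_3\!\left(ml^2 k+\frac{ml\beta-1}{12}\right)\equiv 0\ (\mathrm{mod}\ m)$, that is, $A=ml^2$ and $B=(ml\beta-1)/12$.

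It then remains to evaluate $A$ and $B$ for the three pairs. For $(m,l)=(5,61)$ and $(7,71)$ one has $ml\equiv 5\ (\mathrm{mod}\ 12)$, hence $n_0\equiv 5$ and $\beta=5$, yielding $(A,B)=(18605,127)$ and $(35287,207)$ respectively; for $(m,l)=(11,12553)$ one has $ml\equiv 11\ (\mathrm{mod}\ 12)$, hence $n_0\equiv 11$ and $\beta=11$, giving $(A,B)=(1733355899,126576)$. These are exactly the asserted congruences, so the remaining work is pure bookkeeping. The genuinely hard step is the verification feeding this argument: confirming $F(m;z)\,|\,T(l)\equiv0\ (\mathrm{mod}\ m)$ up to the Sturm bound $\tfrac{(3m-3)\cdot432}{12}\prod_{p\mid432}(1+\tfrac1p)=216(m-1)$ requires the Fourier coefficients of $F(m;z)\,|\,T(l)$, hence values of $b_3$ at arguments of size about $18ml(m-1)$ --- on the order of $2.5\times10^{7}$ for the pair $(11,12553)$ --- together with the search needed to locate such an $l$ in the first place. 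Everything downstream of that computation is the elementary manipulation above.
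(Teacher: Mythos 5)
Your proposal is correct and follows essentially the same route as the paper: take the Sturm-verified vanishing of $F(m;z)\,|\,T(l)$ modulo $m$ for the three pairs $(m,l)$, apply the Hecke coefficient expansion as in the proof of Theorem \ref{main theorem} to get $b_3\bigl(\frac{mln-1}{12}\bigr)\equiv0\ (\mathrm{mod}\ m)$ for $(n,l)=1$, and then specialize $n$ to an admissible residue class modulo $12l$ --- which is precisely the ``elementary computation'' the paper leaves implicit. Your choice of the minimal residue $\beta$ even reproduces the exact constants $127$, $207$, $126576$ of the statement (the paper's general substitution $n\mapsto 12nl+ml+12$ would give larger representatives), so your write-up supplies the bookkeeping the paper omits.
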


~

\noindent
Our method is not available to the case $m=3$, but one can prove that there are infinitely many Ramanujan-type congruences modulo $3$ via results of Lovejoy and Penniston \cite[Corollary 4]{lovejoy20013}.

~

\noindent
\begin{proposition}
If $m$ is a prime of the form $12k+1$, then
$$b_3\left(m^3n+\frac{m^2-1}{12}\right)\equiv0\ (\mathrm{mod}\ 3).$$
\end{proposition}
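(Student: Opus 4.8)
The plan is to work modulo $3$, where the generating function collapses to a weight-one theta series. Since $(1-q^{n})^{3}\equiv 1-q^{3n}\pmod 3$, one has $\prod_{n\ge1}(1-q^{3n})/(1-q^{n})\equiv\prod_{n\ge1}(1-q^{n})^{2}\pmod 3$, and after substituting $q\mapsto q^{12}$ and multiplying by $q$,
\[
\sum_{n\ge0}b_3(n)\,q^{12n+1}\equiv q\prod_{n\ge1}(1-q^{12n})^{2}=\eta(12z)^{2}\pmod 3 .
\]
By Theorems \ref{eta-quotient} and \ref{order of cusp} (taking $N=144$, $r_{12}=2$) the eta-quotient $\eta(12z)^{2}$ lies in $S_{1}(\Gamma_0(144),\chi_{-4})$, where $\chi_{-4}(d)=\left(\frac{-1}{d}\right)$, and it vanishes to order $1$ at every cusp. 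Writing $\eta(12z)^{2}=\sum_{N\ge1}a(N)q^{N}$, the coefficients are supported on $N\equiv1\pmod{12}$ and satisfy $a(N)\equiv b_3\!\left(\frac{N-1}{12}\right)\pmod 3$ there. The quantity to be controlled is exactly $a\!\left(m^{2}(12mn+1)\right)$, because $\frac{m^{2}(12mn+1)-1}{12}=m^{3}n+\frac{m^{2}-1}{12}$.

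Next I would invoke the complex multiplication structure of $\eta(12z)^{2}$, which is where the work of Lovejoy and Penniston \cite{lovejoy20013} enters: $\eta(12z)^{2}$ is the theta series $\theta_{\psi}$ attached to a finite-order Hecke character $\psi$ of $K=\mathbb Q(\sqrt{-3})$, hence a normalized Hecke eigenform with multiplicative $a(N)$. For a rational prime $m\equiv1\pmod{12}$ we have $m\nmid144$ and $m=\mathfrak p\bar{\mathfrak p}$ splits in $K$, so $a(m)=\psi(\mathfrak p)+\psi(\bar{\mathfrak p})$. The anti-invariant character $\lambda=\psi/\psi^{\sigma}$ cuts out $\mathbb Q(\zeta_{12})/K$, and since $m\equiv1\pmod{12}$ splits completely in $\mathbb Q(\zeta_{12})$ one has $\lambda(\mathfrak p)=1$, i.e. $\psi(\mathfrak p)=\psi(\bar{\mathfrak p})$. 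As $a(m)=2\psi(\mathfrak p)$ is then a rational integer equal to twice a root of unity, it must equal $\pm2$; in particular $a(m)\not\equiv0\pmod 3$.

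The proposition now follows from the weight-one Hecke relations. Because $m\equiv1\pmod 4$ we have $\chi_{-4}(m)=1$, so
\[
a(m^{2})=a(m)^{2}-\chi_{-4}(m)=4-1=3 .
\]
Since $t:=12mn+1\equiv1\pmod m$ is coprime to $m$, multiplicativity gives $a\!\left(m^{2}t\right)=a(m^{2})a(t)=3\,a(t)$, whence
\[
b_3\!\left(m^{3}n+\frac{m^{2}-1}{12}\right)\equiv a\!\left(m^{2}(12mn+1)\right)=3\,a(12mn+1)\equiv0\pmod 3
\]
for every $n\ge0$, as claimed.

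The main obstacle is the nonvanishing $a(m)\neq0$ for all primes $m\equiv1\pmod{12}$: the recursion shows that a single value $a(m)=0$ would instead force $a(m^{2})=-1$ and wreck the congruence, so everything hinges on it. This nonvanishing is not formal --- it is equivalent to the splitting condition $\lambda(\mathfrak p)\neq-1$, which holds precisely because primes $\equiv1\pmod{12}$ split completely in $\mathbb Q(\zeta_{12})$. I would therefore devote the core of the argument to pinning down the CM description $\eta(12z)^{2}=\theta_{\psi}$ and the field $\mathbb Q(\zeta_{12})$ cut out by $\psi/\psi^{\sigma}$ (or simply cite \cite[Corollary 4]{lovejoy20013}), after which the displayed computations are routine.
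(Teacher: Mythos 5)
Your proposal is correct, but be aware that the paper itself offers no argument for this proposition at all: it simply states it, remarking that its own machinery (which requires $m\geq5$) fails at $m=3$, and attributes the result to \cite[Corollary 4]{lovejoy20013}. So your writeup supplies an actual proof where the paper only gives a citation, and it does so along what is essentially the Lovejoy--Penniston route: reduce mod $3$ to $\eta(12z)^2$, recognize this as a weight-one dihedral eigenform in $S_1(\Gamma_0(144),\chi_{-4})$, show $a(m)=\pm2$ for every prime $m\equiv1\pmod{12}$ from the CM structure, and finish with the weight-one Hecke relation $a(m^2)=a(m)^2-\chi_{-4}(m)=3\equiv0\pmod 3$ together with multiplicativity applied to $m^2(12mn+1)$, whose exponent bookkeeping $\frac{m^2(12mn+1)-1}{12}=m^3n+\frac{m^2-1}{12}$ and coprimality $(12mn+1,m)=1$ you verify correctly. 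The one load-bearing ingredient is exactly the one you flag: the theta-series identification of $\eta(12z)^2$ with a finite-order Hecke character whose anti-invariant part cuts out $\mathbb{Q}(\zeta_{12})$, which is what rules out $a(m)=0$; this must either be cited from \cite{lovejoy20013} or verified directly, e.g.\ via the classical identity $\eta(12z)^2=\frac12\left(\theta_{x^2+36y^2}-\theta_{4x^2+9y^2}\right)$, which exhibits the form as dihedral with projective Galois image the Klein four group cutting out $\mathbb{Q}(\zeta_{12})$ (and hence, consistently with your description, induced from each of $\mathbb{Q}(i)$, $\mathbb{Q}(\sqrt{-3})$ and $\mathbb{Q}(\sqrt{3})$). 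With that ingredient in place your argument is complete and gives the congruence for all $n\geq0$.
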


~

\noindent
For example, we obtain
$$b_3\left(2197n+14\right)\equiv0\ (\mathrm{mod}\ 3).$$

~

\noindent
As for $b_5(n)$, we compute that
$$\modulo{\sum_{n=0}^\infty b_5\bracket{\frac{mn-1}{6}}q^{n}\ |\ T(l)}{0}{m}$$

\noindent
satisfied for $(m,l)=(7,17)$, $(11,41)$, $(13,16519)$. An elementary computation yields that

~

\begin{proposition}
\begin{equation}
    \notag
    \modulo{b_5(2023n+99)}{0}{7},
\end{equation}
\begin{equation}
    \notag
    \modulo{b_5(18491n+75)}{0}{11},
\end{equation}
\begin{equation}
    \notag
    \modulo{b_5(3547405693n+35791)}{0}{13}.
\end{equation}
\end{proposition}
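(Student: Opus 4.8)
The plan is to read each congruence off the Hecke--operator computation recorded just above, exactly as in the proof of Theorem \ref{main theorem2}. Fix one of the pairs $(m,l)\in\{(7,17),(11,41),(13,16519)\}$ and set $F(m;z)=\sum_{n\geq0}\bfive{\frac{mn-1}{6}}q^n\in S_{2m-2}(\Gamma_0(180),\chi_5)_m$ as in Theorem \ref{cusp form2}. The stated computation gives $F(m;z)\mid T(l)\equiv0\ (\mathrm{mod}\ m)$, so the Hecke relation from the proof of Theorem \ref{main theorem2},
$$F(m;z)\mid T(l)=\sum_{n\geq0}\bracket{\bfive{\frac{mln-1}{6}}+\bracket{\frac{l}{5}}l^{2m-3}\bfive{\frac{mn-l}{6l}}}q^n,$$
together with the vanishing of $b_5$ at non-integers and the fact that $l\neq m$ in each case, yields $\bfive{\frac{mln-1}{6}}\equiv0\ (\mathrm{mod}\ m)$ for every $n$ with $(n,l)=1$.

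It then remains to repackage this as a single arithmetic progression. The term $\bfive{\frac{mln-1}{6}}$ is a genuine partition count only when $(mln-1)/6\in\bbZ_{\geq0}$, i.e.\ when $mln\equiv1\ (\mathrm{mod}\ 6)$; combined with $(n,l)=1$ this pins $n$ to residue classes modulo $6l$. Writing $n=n_0+6lt$ for the least admissible $n_0\geq1$, one has $n\equiv n_0\ (\mathrm{mod}\ l)$ throughout, so $(n,l)=1$ persists, and the argument becomes
$$\frac{ml(n_0+6lt)-1}{6}=ml^2\,t+\frac{mln_0-1}{6}.$$
Hence $\bfive{ml^2 t+B}\equiv0\ (\mathrm{mod}\ m)$ for all $t\geq0$, with modulus $A=ml^2$ and offset $B=(mln_0-1)/6$. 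For $(7,17)$ one has $ml=119\equiv5\ (\mathrm{mod}\ 6)$, so $n_0=5$, giving $A=2023$ and $B=(595-1)/6=99$; for $(11,41)$, $ml=451\equiv1\ (\mathrm{mod}\ 6)$, so $n_0=1$, giving $A=18491$ and $B=450/6=75$; for $(13,16519)$, $ml=214747\equiv1\ (\mathrm{mod}\ 6)$, so $n_0=1$, giving $A=3547405693$ and $B=214746/6=35791$. These are precisely the three displayed congruences.

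The only genuinely laborious step is establishing the input $F(m;z)\mid T(l)\equiv0\ (\mathrm{mod}\ m)$, which is where Sturm's Theorem \ref{Sturm's theorem} enters: since $F(m;z)\mid T(l)\in S_{2m-2}(\Gamma_0(180),\chi_5)_m$ for $l\nmid 180$, it suffices to check that its $q$-expansion vanishes modulo $m$ up to the Sturm bound $\frac{(2m-2)\cdot180}{12}\prod_{p\mid180}\bracket{1+\frac1p}=72(m-1)$. Computing those coefficients requires the values $\bfive{\frac{mln-1}{6}}$ for $n$ up to roughly $72(m-1)$, hence partition counts $b_5$ of arguments of size $O(m^2l)$, which for $(13,16519)$ is on the order of $10^7$: a large but entirely finite verification. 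Given this input the extraction above is elementary, and the only points needing care are the integrality condition $mln\equiv1\ (\mathrm{mod}\ 6)$, the persistence of the coprimality $(n,l)=1$ along the progression, and the harmless exclusions $l\neq m$.
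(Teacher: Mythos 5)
Your proposal is correct and follows exactly the paper's route: verify $F(m;z)\,|\,T(l)\equiv0\ (\mathrm{mod}\ m)$ for $(m,l)=(7,17),(11,41),(13,16519)$ via Sturm's bound, invoke the Hecke-operator argument of Theorem \ref{main theorem2} to get $\bfive{\frac{mln-1}{6}}\equiv0\ (\mathrm{mod}\ m)$ for $(n,l)=1$, and then extract the arithmetic progression by an elementary computation. Your extraction (taking the least admissible $n_0$ with $mln_0\equiv1\ (\mathrm{mod}\ 6)$ and $n=n_0+6lt$) reproduces precisely the stated moduli $ml^2$ and offsets $99$, $75$, $35791$, i.e.\ it fills in the details of what the paper compresses into ``an elementary computation yields.''
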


~

\noindent
Moreover, the congruence $\modulo{b_5(5n+4)}{0}{5}$ implies that
$$\modulo{\sum_{n=0}^\infty b_5\bracket{\frac{5n-1}{6}}q^{n}\ |\ T(l)}{0}{5}$$

\noindent
satisfied for each prime $l$.

~

\section{More on \textit{k}-regular partitions}

In this paper, we prove that for $b_k(n)(k=3,5)$ and each prime $m\geq5$ that there are infinitely many Ramanujan-type congruences modulo $m$. In fact, we conjecture that

~

\begin{conjecture}
For $b_k(n)(k=3,5)$ and each positive integer $m$ that there are infinitely many Ramanujan-type congruences modulo $m$.
\end{conjecture}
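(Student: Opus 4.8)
The plan is to reduce the conjecture to the case of prime powers and to treat each prime separately. Writing $m=\prod_i p_i^{j_i}$, it suffices to produce for each factor $p_i^{j_i}$ an arithmetic progression $A_in+B_i$ on which $b_k$ vanishes modulo $p_i^{j_i}$: the Chinese Remainder Theorem then splices these into a single progression on which $b_k\equiv0\pmod m$, exactly as in the squarefree Corollary following Theorem \ref{infinitely many Ramanujan-type congruences}, and infinitude for each factor yields infinitude for $m$. Thus the conjecture follows once one establishes, for every prime $p$ and every $j\geq1$, infinitely many Ramanujan-type congruences for $b_3$ and $b_5$ modulo $p^j$.

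For every prime $p\geq5$ coprime to the relevant level I would upgrade the mod-$p$ construction of Theorems \ref{cusp form1} and \ref{cusp form2} to a statement modulo $p^j$ using the theory of weakly holomorphic modular forms. The natural objects are the weight-zero weakly holomorphic forms $\eta(3z)/\eta(z)=q^{1/12}\sum_{n\geq0}b_3(n)q^n$ and $\eta(5z)/\eta(z)=q^{1/6}\sum_{n\geq0}b_5(n)q^n$, which are holomorphic on $\mathcal H$ and meromorphic only at the cusps. The present proof does not transcribe directly because it relies on the Frobenius congruence $\prod_{n\geq1}(1-q^n)^p\equiv\prod_{n\geq1}(1-q^{pn})\pmod p$ to replace $f(m;z)$ by an honest cusp form, and this congruence fails modulo $p^j$ for $j\geq2$. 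The remedy is Treneer's extension (building on Ono and Ahlgren--Ono) of these distribution results to weakly holomorphic modular forms: after applying a suitable power of the operator $U(p)$ one obtains a series congruent modulo $p^j$ to a holomorphic cusp form, to which a prime-power strengthening of Serre's density theorem (Theorem \ref{Serre's theorem}) applies. This yields a positive density of primes $\ell$ annihilating the form modulo $p^j$, and hence the desired congruences.

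The crux, and the step I expect to resist this machinery, is formed by the primes dividing the levels $432=2^4\cdot3^3$ and $180=2^2\cdot3^2\cdot5$, above all $p=2$ and $p=3$. For these the mod-$p$ reduction of the eta-quotient, and hence the passage to a cusp form, is no longer routine, and the residual mod-$p$ Galois representations are exceptional, so both the construction and the Chebotarev input behind the density statement require separate care. For $b_3$ the situation is favourable, because modulo $3$ the generating function collapses, $\sum_{n\geq0}b_3(n)q^n\equiv\prod_{n\geq1}(1-q^n)^2\pmod 3$, placing the problem within reach of the weight-one theory used by Lovejoy--Penniston (the mod-$3$ Proposition above is exactly of this type), while the parity is controlled by Keith--Zanello; thus the conjecture for $k=3$ appears attainable. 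The genuinely open obstacle is $b_5$ modulo $3$: here $\prod_{n\geq1}(1-q^{5n})/(1-q^n)$ admits no such Frobenius simplification modulo $3$, nothing is presently known, and settling the conjecture for $3\mid m$ when $k=5$ would demand a new ingredient---most plausibly the identification of a sieved subseries of $\sum b_5(n)q^n$ with a modular form whose mod-$3$ Hecke action can be controlled.
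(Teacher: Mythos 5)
You are attempting to prove something the paper itself states only as a conjecture: there is no proof in the paper to compare against, and the introduction explicitly records that nothing whatsoever is known about $b_5(n)$ modulo $3$. Your proposal does not close this gap; it is a program, and by your own admission in the final paragraph it terminates in an open problem rather than a proof. Concretely: the reduction to prime powers via the Chinese Remainder Theorem is sound, and invoking Treneer's theorem on weakly holomorphic modular forms is the right tool for primes $p\geq5$ coprime to the levels --- that part, carried out carefully, would genuinely extend Theorem \ref{infinitely many Ramanujan-type congruences} from prime moduli to prime-power moduli $p^j$ with $p\geq5$. But the conjecture quantifies over \emph{all} positive integers $m$, so $p=2$ and $p=3$ are not peripheral exceptions; they are part of the statement, and your treatment of them is not an argument. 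For $b_3$ modulo $3$ one can indeed lean on Lovejoy--Penniston, as the paper's own Proposition in Section \ref{examples} does, but for $b_3$ modulo $2$ (and modulo $2^j$, $3^j$ for $j\geq2$) the phrase ``the parity is controlled by Keith--Zanello'' proves nothing: that paper studies the parity and density of odd coefficients of such eta-quotients; it does not hand you arithmetic progressions on which $b_3$ vanishes identically modulo $2^j$ for every $j$. And for $b_5$ modulo $3$ you state outright that a new ingredient is needed. A proof cannot delegate one of its required cases to a hoped-for future ingredient.

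The honest verdict is that your write-up is a correct and well-informed diagnosis of \emph{why} this statement is a conjecture, not a proof of it. If you extract the part that does work --- Treneer's machinery applied to $\eta(3z)/\eta(z)$ and $\eta(5z)/\eta(z)$ for prime powers $p^j$ with $p\geq5$ --- and combine it with the Chinese Remainder Theorem, you would obtain a genuine strengthening of the paper's results, namely infinitely many Ramanujan-type congruences for $b_3$ and $b_5$ modulo any integer coprime to $6$ (improving on the squarefree Corollary in Section 3). That would be worth writing down carefully as a theorem. But the conjecture as stated remains open precisely at the points you flag, and no rearrangement of the present tools resolves them.
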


~

We also have the following conjecture analogous to Newman's Conjecture.

~

\begin{conjecture}
If $m$ is an integer, $k=3,5$, then for each residue class $r\ (\mathrm{mod}\ m)$ there are infinitely many integers $n$ for which $b_k(n)\equiv r\ (\mathrm{mod}\ m)$.
\end{conjecture}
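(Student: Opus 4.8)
The plan is to decompose the conjecture along the residue class $r$ and the modulus $m$, and to push it as far as possible onto the machinery already developed. Writing $m=\prod_i p_i^{a_i}$, it would suffice to attain each prescribed residue modulo every prime power $p_i^{a_i}$ and then to realize these congruences for a common $n$ by the Chinese Remainder Theorem. I would therefore first isolate, for each prime power, the two ingredients needed: hitting the zero class, and hitting every nonzero class.

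For the zero class the case of a prime $m\geq 5$ is exactly the Corollary to Theorem \ref{infinitely many Ramanujan-type congruences}, which even yields $\gg X$ many admissible $n$, while the case $m=3$ for $b_3(n)$ is supplied by Lovejoy and Penniston. The nonzero classes for a prime $m\geq 5$ are governed by Propositions \ref{other residue classes} and \ref{other residue classes2}: by those results it suffices to produce a single witness $k$ (necessarily with $k<18(m-1)$, resp. $k<10(m-1)$) for which
$$b_3\!\left(mk+\frac{m^2-1}{12}\right)\not\equiv 0\pmod m,\qquad\text{resp.}\qquad b_5\!\left(mk+\frac{m^2-1}{6}\right)\not\equiv 0\pmod m.$$
Equivalently, by Theorems \ref{cusp form1} and \ref{cusp form2}, the entire nonzero-residue problem for a fixed prime $m$ reduces to showing that the cusp form
$$F(m;z)=\sum_{n=0}^\infty b_3\!\left(\frac{mn-1}{12}\right)q^n$$
(and its $b_5$-analogue) is not identically zero modulo $m$.

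I would attack this non-vanishing by exhibiting an explicit low-index coefficient that is coprime to $m$. The smallest $n$ with $mn\equiv 1\pmod{12}$ produces a coefficient $b_3(j)$ with $j=(mn-1)/12$ lying in the fixed residue class $j\equiv-1/12\pmod m$; for instance when $m=5$ this gives $b_3(2)=2\not\equiv 0$, so $F(5;z)\not\equiv 0\pmod 5$. Because Proposition \ref{other residue classes} confines any witness to $k<18(m-1)$, and Sturm's bound (Theorem \ref{Sturm's theorem}) bounds the relevant range by $216(m-1)$, for each individual $m$ the non-vanishing is a finite verification. The genuine difficulty is to make this \emph{uniform}: to prove that for every prime $m\geq 5$ at least one of the $O(m)$ guaranteed coefficients avoids $0$ modulo $m$. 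This is exactly where the modular-forms input runs out, since the residue of a single value $b_3(j)$ or $b_5(j)$ modulo $m$ is not controlled by the Hecke theory used above.

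I expect two further obstacles beyond this uniform non-vanishing. First, the Chinese Remainder step is not automatic: the positive-density (resp.\ $\gg X/\log X$) statements for distinct primes come from independent applications of the Serre-type Theorem \ref{Serre's theorem v2}, and they do not obviously produce a \emph{single} $n$ realizing prescribed residues at all $p_i$ simultaneously; moreover the prime powers $p^{a}$ with $a\geq 2$ are not themselves covered and would require a separate lifting argument. Establishing the needed joint independence would, in the spirit of Ono's treatment of Newman's conjecture, likely demand a Galois-representation and Chebotarev argument rather than the elementary combination sketched here. Second, the small primes $m=2,3$ lie outside the construction entirely: the eta-quotient manipulations and the shifts $(m^2-1)/12$, $(m^2-1)/6$ all require $m\geq 5$, and, as noted in the introduction, even $b_5(n)$ modulo $3$ is at present completely inaccessible. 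Removing these boundary cases, together with the uniform non-vanishing of $F(m;z)$, are the two places where genuinely new ideas seem to be required.
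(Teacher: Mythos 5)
The statement you were asked to prove is not a theorem of the paper at all: it is stated there as a \emph{conjecture} (the paper's analogue of Newman's conjecture), and the paper offers no proof of it, so there is no argument of the author's to compare yours against. Your proposal, correctly, is not a proof either --- it is a reduction of the conjecture onto the paper's machinery together with an honest identification of where that machinery stops. That assessment is accurate, and your reductions are sound: the zero class for primes $m\geq 5$ is indeed covered by the Corollary to Theorem \ref{infinitely many Ramanujan-type congruences}; the nonzero classes for a fixed prime $m\geq5$ do reduce, via Propositions \ref{other residue classes} and \ref{other residue classes2}, to the non-vanishing of $F(m;z)$ modulo $m$ (your observation that the nonzero coefficients of $F(m;z)$ are exactly the values $b_3(mk+(m^2-1)/12)$ is correct, as is the check $b_3(2)=2\not\equiv0\pmod 5$ and the Sturm bound $216(m-1)$).

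The three obstacles you name are the genuine ones, and it is worth being explicit that each is currently out of reach of the paper's methods: (i) the non-vanishing of $F(m;z)\pmod m$ is a finite check for each individual prime but nothing in the Hecke-theoretic setup makes it uniform in $m$; (ii) the Chinese Remainder step fails to be automatic because the $\gg X/\log X$ sets of $n$ produced by Theorem \ref{Serre's theorem v2} for different primes $p_i$ live in unrelated arithmetic progressions and there is no independence statement letting you intersect them, and prime powers $p^a$, $a\geq2$, are untouched; (iii) the primes $2$ and $3$ (and, for $b_5$, the modulus $3$ especially, which the introduction flags as completely open) lie outside the eta-quotient construction since the shifts $(m^2-1)/12$ and $(m^2-1)/6$ require $m\geq5$. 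So the verdict is: no step of your proposal is wrong, but it does not prove the statement --- nor could it be expected to, since the statement is an open problem that the paper itself only conjectures.
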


Though Ramanujan-type congruences modulo primes $m\geq5$ exist, one may need enormous computation to find some. We encourage interested readers to find examples of congruences modulo other primes.

One can modify our proof to get some partial results of $b_{11}(n)$, the number of $11$-regular partitions of $n$. In fact, if $p$ is a prime for which $p>5$ and $\modulo{p}{5,7}{12}$, then $b_{11}(n)$ has infinitely many Ramanujan-type congruences modulo $p$. For example, we obtain $\modulo{b_{11}(43687n+230)}{0}{7}$. However, one can do better since from \cite{gordon1997divisibility} we have $\modulo{b_{11}(11n+6)}{0}{11}$. We intent to take up these in a future paper.

~

\section*{Acknowledgement}

The ideas came to us after seeing the paper of Ono \cite{ono2000distribution} and Lovejoy \cite{lovejoy2001divisibility}.

\begin{comment}

\section*{Declarations}

Conflict of interest statement: We declare that we do not have any commercial or associative interest that represents a conflict of interest in connection with the work submitted.

Data availability statement: Not applicable.

\end{comment}

~

% \printbibliography

\end{document}